\newtheorem{theorem}{Theorem}[section]
\newtheorem{corollary}[theorem]{Corollary}
\newtheorem{definition}[theorem]{Definition}
\newtheorem{example}[theorem]{Example}
\newtheorem{lemma}[theorem]{Lemma}
\newtheorem{problem}[theorem]{Problem}
\newtheorem{proposition}[theorem]{Proposition}
\newtheorem{remark}[theorem]{Remark}
\newcommand{\bb}{\mathbb}
\newcommand{\f}{\mathsf}
\newcommand{\fr}{\mathfrak}
\newcommand{\eq}{\coloneqq} 
\newcommand{\ds}{\displaystyle} 
\date{October 23, 2021}
\begin{document}
\title[K\"{o}the's Problem, Kurosch-Levitzki Problem and Graded Rings]{K\"{o}the's Problem, Kurosch-Levitzki Problem and Graded Rings}

\author[De Fran\c{c}a]{Antonio de Fran\c{c}a}
\address{Department of Mathematics, Federal University of Tocantins, Arraias--TO, Brazil}
\email{mardua13@gmail.com}
\thanks{Supported by CNPq and Capes (Brazil)}

\author[Sviridova]{Irina Sviridova}
\address{Department of Mathematics, University of Bras\'{i}lia, Bras\'{i}lia--DF, Brasil}
\email{I.Sviridova@mat.unb.br}

\keywords{ring, $\f{S}$-graded ring, $\f{S}$-grading, nil ring, nilpotent ring, finite support, cancellative monoid, $\f{f}$-commutative, neutral component, K\"{o}the's Problem, Dubnov-Ivanov-Nagata-Higman Theorem, Kurosh-Levitzki Problem}

\subjclass[2010]{Primary 16W50; Secondary 13A02, 16R10, 16W22}

\begin{abstract}
Let $\fr{R}$ be an associative ring graded by left cancellative monoid $\f{S}$, and $e$ the neutral element of $\f{S}$. We study the following problem: if $\fr{R}_e$ is nil, then is $\fr{R}$ nil/nilpotent? We have proved that if $\fr{R}_e$ is nil (of bounded index) and $\f{f}$-commutative, then $\fr{R}$ is nil (of bounded index). Later, we have shown that $\fr{R}_e$ being nilpotent implies $\fr{R}$ is nilpotent. Consequently, we have exhibited a generalization of  Dubnov-Ivanov-Nagata-Higman Theorem for the graded algebras case. Furthermore, we have exhibited relations between graded rings and the problems of K\"{o}the and Kurosh-Levitzki. We have proved that graded rings and $\f{f}$-commutative rings provide positive solutions to  these problems.
\end{abstract}

\maketitle


\section{Introduction}


Let $\f{G}$ be a group and $\fr{A}$ an associative algebra with a $\f{G}$-grading. One of the central problems in the study of graded algebras is to obtain non-graded (ordinary) properties from the analysis of gradings of a  given algebra and vice versa. Assume $\fr{A}=\bigoplus_{g\in \f{G}} \fr{A}_g$, a $\f{G}$-grading on $\fr{A}$, with $\f{G}$ a finite group such that $e$ is its neutral element. In \cite{BergCohe86}, Bergen and Cohen showed that if $\fr{A}_e$ is  a $PI$-algebra, then $\fr{A}$ is also a $PI$-algebra. However, a bound for the degree of the polynomial identity satisfied by $\fr{A}$ was not found. In \cite{BahtGiamRile98}, Bahturin, Giambruno and Riley proved the same result, but, in addition, they gave a bound for the minimal degree of the polynomial identity satisfied by $\fr{A}$. In this sense, in this work, we determine some relations between the graded polynomial identities\footnote{For more details about \textit{graded algebras}, \textit{polynomial identity} and \textit{graded polynomial identity}, see \cite{GiamZaic05}.} and the non-graded polynomial identities of a given ring $\fr{R}$: we have studied the class of graded rings with nil neutral component. 

In \cite{BergIsaa73}, Bergman and Isaacs proved that if a finite solvable group $\f{G}$ acts by automorphisms on a ring $\fr{R}$ without non-zero fixed points, i.e. $\fr{R}^{\f{G}}=\{0\}$, and without $|\f{G}|$-torsion, then $\fr{R}^{|\f{G}|}=\{0\}$. They also proved that if $\f{G}$ is a finite group acting on a ring $\fr{R}$ without $|\f{G}|$-torsion, and $\fr{R}^{\f{G}}$ is nilpotent, then $\fr{R}$ is nilpotent. Other result proved by these authors is that if $\fr{R}$ is a ring graded by a finite cyclic group such that $\fr{R}_e$ is central, then the commutator ideal of $\fr{R}$ is nil. Already in \cite{Khuk93}, Khukhro presents the following result (Corollary 4.3.8 (p. 101)): {\it if a Lie ring admits a regular automorphism of prime order, then it is nilpotent}. Already Makarenko, in \cite{Maka18}, using techniques created by Khukhro in \cite{Khuk93}, showed that given a $\f{G}$-graded associative algebra $\fr{A}$, where $\f{G}$ is a finite group of order $n$, if $\fr{A}_e$ has a nilpotent two-sided ideal of finite codimension in $\fr{A}_e$, then $\fr{A}$ has a homogeneous nilpotent two-sided ideal of nilpotency index bounded by a function on $n$ and of finite codimension.

In this work, we study some problems that are direct implications of affirmation ``$\fr{R}_e$ is nil'' (or ``$\fr{R}_e$ is nilpotent''). Our interest was to find conditions to provide the nilpotency of a given graded ring whose the neutral component is nil. Basically, we have (partially) answered the following question:

	\vspace{0.15cm}
	\noindent{\bf Problem \ref{problem01}}:	Does $\fr{R}_e$ be nil imply that $\fr{R}$ is nil?
	\vspace{0.15cm}

\noindent In this way, we have some natural questions: how to characterize a graded ring (or algebra) whose neutral component is nil (or nilpotent)? Does the nil neutral component provide that the ring (or algebra) is nilpotent? If so, what are the possible limits for its nilpotency index?

Let $\f{S}$ be a left cancellative monoid, i.e. $gh=gt$ implies $h=t$ for any $g,h,t\in\f{S}$. Assume that $e$ is the element neutral of $\f{S}$. Let $\fr{R}$ be an associative ring with a finite $\f{S}$-grading $\Gamma$. In this work, we have studied associative rings $\fr{R}$ with an $\f{S}$-grading whose neutral component $\fr{R}_e$ is nil. In various cases, we have provided upper bounds for the nilpotency index of such rings. Firstly, we have proved that 

	\vspace{0.15cm}
	\noindent{\bf Proposition \ref{3.03}}: Let $\fr{R}$ be a ring with a finite $\f{S}$-grading $\Gamma$ of order $d$, where $\f{S}$ is a left cancellative monoid. If $\fr{R}_e=\{0\}$, then $\fr{R}^{d+1}=\{0\}$.
	\vspace{0.15cm}

Later, in Proposition \ref{3.31}, we have showed that ``\textit{If $|\f{Supp}(\Gamma)|=d$, and $\fr{R}_e$ is a nonzero nil ring (resp. of bounded index), then $\fr{R}$ is an $\f{S}$-nil ring (resp. of bounded index), i.e. for each $a\in\fr{R}_g$, for any $g\in\f{S}$, there exists $n_a\in\bb{N}$ such that $a^{n_a}=0$}''.

Let us define an \textit{$\f{f}$-commutative} ring as being an associative ring $\fr{R}$ such that there exist a semigroup $\fr{S}$ that acts on the left of $\fr{R}$, and a mapping $\f{f}:\fr{R}\times\fr{R}\longrightarrow \fr{S}$ such that $ab-\f{f}(a,b)ba=0$ for any $a,b\in\fr{R}$. Therefore, we prove that 

	\vspace{0.15cm}
	\noindent{\bf Theorem \ref{3.15}}: If $\fr{R}_e$ is nil (of bounded index) and $\f{f}$-commutative, then $\fr{R}$ is nil (of bounded index).
	\vspace{0.15cm}

And, using the same techniques applied in the proof of the theorem above, we have proved that ``\textit{$\fr{R}_e$ is nilpotent iff $\fr{R}$ is nilpotent}'', and we have ensured that $r\leq\f{nd}(\fr{R})\leq r|\f{Supp}(\Gamma)|$ for $\f{nd}(\fr{R}_e)=r>1$ (see Theorem \ref{3.18}).

An important application of our results arises when we relate them to \textit{Dubnov-Ivanov-Nagata-Higman Theorem}, Kurosh-\textit{Levitzki Problem} and \textit{K\"{o}the Problem}. Below, let us present these three problems.

First, let us now introduce the Dubnov-Ivanov-Nagata-Higman Theorem. Under suitable conditions, it ensures the equivalence between nil algebras of bounded degree and nilpotent algebras. In 1953, Nagata proved that any nil algebra of bounded degree over a field of characteristic zero is nilpotent (see \cite{Naga52}). Afterwards, in 1956, Higman generalized the result of Nagata for any field (see \cite{Higm56}). Posteriorly, it was discovered that this result was firstly published in 1943 by Dubnov and Ivanov (see \cite{DubnIvan43}). In \cite{Kuzm75}, Kuzmin exhibited a lower bound for the nilpotency index of a nil algebra of bounded index $\fr{R}$ over a field of characteristic zero. He showed that $\f{nd}(\fr{R})\geq n(n+1)/2$, where $n=\f{nd}_{nil}(\fr{R})$. Later, in \cite{Razm74}, Razmyslov proposed a smaller estimate than that given by Higman in \cite{Higm56}, and hence, in \cite{Razm94}, he proved it. Vaughan-Lee described in \cite{Vaug93} an algorithm for computing finite dimensional graded algebras and applied his algorithm to show that ``\textit{if an associative algebra $\fr{A}$ over a field of characteristic zero satisfies $a^4=0$ for any $a\in\fr{A}$, then $\fr{A}^{10}=\{0\}$}''.

Therefore, we have proved the following theorem that is a generalization of Dubnov-Ivanov-Nagata-Higman Theorem:

\vspace{0.15cm}
\noindent{\bf Theorem \ref{3.24}}: Let $\f{S}$ be a left cancellative monoid, and $\fr{A}$ an associative algebra over a field $\bb{F}$ with a finite $\f{S}$-grading, $\f{char}(\bb{F})=p$. Suppose that $\fr{A}_e$ is a nil algebra of bounded index. If $p=0$ or $p>\f{nd}_{nil}(\fr{A}_e)$, then $\fr{A}$ is a nilpotent algebra.
\vspace{0.15cm}

Now, let us present the Kurosh-Levitzki Problem. In Ring Theory, Kurosh-Levitzki Problem is analogous to a weak version Burnside Problem (in Group Theory). The history of Kurosh-Levitzki problem is preceded by others two problems: Kurosh Problem and Levitzki Problem.

The Kurosh's and Levitzki's problems were proposed in \cite{Kuro41} and  \cite{Levi43}, respectively. In \cite{Kuro41}, Kurosh, in analogy to Burnise Problem (in Group Theory), posed the following problem: \textit{is every algebraic algebra\footnote{An algebra $\fr{A}$ is called \textit{algebraic} (of bounded degree) if any element $a\in\fr{A}$ satisfies a non-trivial equation $a^n+\lambda_{n-1} a^{n-1}+\cdots+\lambda_1a=0$ (for some $n$ fixed) with coefficients in the base field.} a locally finite\footnote{An algebra is called \textit{locally finite} if each finite set of this algebra generates an algebra of finite dimensional.} algebra?} In this same work, Kurosh answered his question positively in the case where the elements of the algebra have degrees not greater than 3. In \cite{Jaco45}, Jacobson's results reduce the study of Kurosh Problem for algebraic algebras of bounded degree to the study of nil algebras of bounded degree. Later, in \cite{Levi43}, Levitzki posed that ``\textit{is every nil ring a nilpotent ring?''}. This problem is known as \textit{Levitzki Problem}. Levitzki, in \cite{Levi46}, proved that ``\textit{each nil ring of bounded index is semi-nilpotent\footnote{A ring is called \textit{semi-nilpotent} if each finite set of elements in this ring generates a nilpotent ring.}''}. Kaplansky proved in \cite{Kapl50} that ``\textit{any algebraic algebra satisfying a polynomial identity is locally finite''}. Note that Dubnov-Ivanov-Nagata-Higman Theorem (see Theorem \ref{teonagatahigman}) provides a positive solution to Kurosh Problem.

In this way, Kurosh-Levitzki Problem is formulated by ``\textit{if $\fr{A}$ is nil and finitely generated (as algebra), then is $\fr{A}$ nilpotent?''}. In \cite{Golo64} and \cite{GoloShaf64}, Golod and Shafarevich gave counterexamples ensuring that the problem is false, in general. For more details about Kurosch-Levitzky Problem, see \cite{Zelm97} and the references given there. In \cite{Kapl48}, Kaplansky gave a positive solution to Kurosh-Levitzki Problem when $\fr{A}$ satisfies a polynomial identity\footnote{In fact, this result is a consequence of Theorem 5, in \cite{Kapl48}, when applied to the results found in the works \cite{Kuro41,Levi43,Levi46,Levi49,Kapl50}.}.

In the works\footnote{The works \cite{Shir57(eng),Shir57.1(eng),Shir58(eng)} are English translations of the published scientific works \cite{Shir57,Shir57.1,Shir58}, respectively, of mathematician A.I. Shirshov.} \cite{Shir57(eng),Shir57.1(eng),Shir58(eng)}, Shirshov gave some positive solutions to Kurosh Problem and Levitzki Problem. In \cite{Shir57(eng)}, Shirshov proved that ``\textit{any alternative\footnote{A ring $S$ is called \textit{alternative} if for any $a,b\in S$ we have $(ab)b=a(bb)$ and $b(ba)=(bb)a$.} nil-rings $S$ of bounded index, without elements of order 2 in the additive group, is locally nilpotent''}, and also, ``\textit{any algebraic alternative algebra $S$ of bounded index over a field $\bb{F}$ of characteristic $\neq2$ is locally finite''}. Already in \cite{Shir57.1(eng)}, Shirshov proved the following theorem: ``\textit{suppose that an associative algebra $\fr{A}$ is generated by elements $x_1,\dots,x_r$. Assume that $\fr{A}$ satisfies a polynomial identity of degree $n$, and every monomial in $\{x_i\}$ of degree $\leq n$ is nilpotent. Then $\fr{A}$ is nilpotent''}. The algebra $\fr{A}$ with assumed assumptions is called \textit{an algebra satisfying identical relations} and was defined by Malcev in \cite{Malc50}. Later, using a new class of rings introduced by Drazin in \cite{Draz57}, Shirshov proved in \cite{Shir58(eng)} that ``\textit{any nil SP-ring\footnote{An \textit{SP-ring} is a ring with strongly pivotal monomial. For more details and a formal definition of the \textit{SP-ring}, see \cite{Draz57}.} is locally nilpotent''}. 
	
%
%

Adding in Theorem \ref{3.15} above the hypotheses ``\textit{$\fr{R}_e$ is finitely generated}'', we have proved that $\fr{R}$ is a nilpotent ring, i.e. we have proved that
	
	\vspace{0.15cm}
	\noindent{\bf Theorem \ref{3.19}}: If $\fr{R}$ is nil, finitely generated and $\f{f}$-commutative, then $\fr{R}$ is nilpotent.
	\vspace{0.15cm}
		
From this, the following result generalizes the previous theorem.	
		
	\vspace{0.15cm}
	\noindent{\bf Theorem \ref{3.20}}: If $\fr{R}_e$ is nil, finitely generated and $\f{f}$-commutative, then $\fr{R}$ is a nilpotent ring.
	\vspace{0.15cm}

In general, the assumptions ``finitely generated'' and ``$\f{f}$-commutative'' in the theorem above are necessary, and to guarantee this, we have exhibited some examples.

Finally, we have exhibited a considerable relation between graded rings and {\it K\"{o}the's Problem}. The K\"{o}the's Problem and Kurosh Problem are related (see \cite{Smok01}). This problem was proposed by G. K\"{o}the in 1930 (see \cite{Koth30}), and since then, this conjecture has been confirmed in some classes of rings, but it still does not have a general solution. K\"{o}the's Problem asks whether the sum of two right nil ideals of a ring is nil, or equivalently, ``\textit{if a ring $\fr{R}$ has no nonzero nil ideals, then does $\fr{R}$ have nonzero one-sided nil ideals?}''. Various mathematicians have studied this problem since 1930. To more details, as well as an overview, about K\"{o}the's Problem, see the works \cite{FishKrem83,Ferr01,Smok01,Smok00} and their references. In \cite{Krem72}, Krempa exhibited some problems related to K\"{o}the's Problem. Moreover, K\"{o}the's conjecture has several different formulations  (see \cite{Smok01}), one of them says that K\"{o}the's Problem is equivalent to ``\textit{for any nil ring $\fr{R}$, the ring of $n\times n$ matrices over $\fr{R}$ is nil''}, which was proved by Krempa in \cite{Krem72} and (independently) by Sands in \cite{Sand73}). In our work, we have proved that ``\textit{K\"{o}the's Problem has a positive solution in the class of all $\f{f}$-commutative rings graded by a left cancellative monoid}'' (see Theorem \ref{3.26}). Furthermore, we have shown that 

\vspace{0.15cm}
\noindent{\bf Theorem \ref{3.29}}: A positive answer to {\bf Problem \ref{problem01}} implies that the K\"{o}the's Problem has a positive solution.
\vspace{0.15cm}

\noindent Equivalently, a counterexample to K\"{o}the's Problem would yield a counterexample to {\bf Problem \ref{problem01}}. 


\section{Preliminaries}

In this section, we recall some concepts, although basic, which are important for our study. We also introduce the definitions of ``\textit{ring grading by a monoid}'' and ``\textit{$\f{f}$-commutative ring}''. Here, unless stated otherwise, all the rings $\fr{R}$ are associative rings.

First, let us talk a little bit about (one-sided) \textit{cancellative monoid}, \textit{monoid order} and \textit{quotient monoid}. A monoid $\f{S}$ is a nonempty set together with a binary operation from $\f{S}\times\f{S}$ to $\f{S}$ which is associative and has an identity element.

A monoid $\f{S}$ is said to be \textit{left cancellative} (resp. \textit{right cancellative}) if, for any $g,h,t\in\f{S}$, $g \cdot h = g \cdot t$ (resp. $h \cdot g = t \cdot g$) implies $h = t$. If $\f{S}$ is a right and left cancellative monoid, then we say that $\f{S}$ is a cancellative ring. Note that any group is a cancellative monoid.

Let $\f{S}$ be a monoid and $e$ the identity element of $\f{S}$. Given an element $g\in \f{S}-\{e\}$, if there exists an $m\in\bb{N}$ such that $g^m=e$, then we say that the \textit{order} of $g$ is the smallest number $n\in \bb{N}$ such that $g^n=e$, and we denote $\f{o}(g)=n$. Otherwise, if there is not $m\in \bb{N}$ such that $g^m=e$, then we say that $g$ has infinite order, and we denote $\f{o}(g)=\infty$. Note that when $\f{S}$ is a finite left cancellative monoid, all the elements of $\f{S}$ have finite orders.

\begin{definition}

Let $\fr{R}$ be a ring, $\f{S}$ a monoid. An \textbf{$\f{S}$-grading} on $\fr{R}$ is a decomposition 
$$
\Gamma: \fr{R}=\bigoplus_{g\in\f{S}} \fr{R}_g
$$
that satisfies $\fr{R}_g \fr{R}_h\subseteq \fr{R}_{gh}$ for all $g,h\in\f{S}$, where each $\fr{R}_g$ is an additive subgroup of $\fr{R}$. The \textbf{support} of $\Gamma$ is the set $\f{Supp}(\Gamma)=\{g\in\f{S} : \fr{R}_g\neq0\}$.
\end{definition}

We say that $\Gamma$ is a \textit{finite $\f{S}$-grading} on $\fr{R}$ when $\f{Supp}(\Gamma)$ is a finite set. In this case, we say that $\Gamma$ has \textit{finite support} (or ``$\Gamma$ is finite'').

\begin{definition}
Let $\fr{R}$ be a ring. We say that $\fr{R}$ is \textbf{nilpotent} if there exists an integer $n>0$ such that $a_1 a_2\cdots a_n=0$ for any $a_1, a_2, \dots, a_n\in \fr{R}$. In this case, the \textbf{nilpotency index} of $\fr{R}$, denoted by $\f{nd}(\fr{R})$, is defined as the smallest number $d\in \bb{N}$ such that $\fr{R}^d=\{0\}$.

We say that $\fr{R}$ is a \textbf{nil ring} if for each $a\in\fr{R}$, there exists some $n_a\in\bb{N}$ such that $a^{n_a}=0$. If there exists some integer $n>0$ such that $b^n=0$ for any $b\in\fr{R}$, thus $\fr{R}$ is a \textbf{nil of bounded index}.  In this case, the \textbf{nil index} of $\fr{R}$, denoted by $\f{nd}_{nil}(\fr{R})$, is defined as the smallest number $p\in \bb{N}$ such that $a^p=0$ for all $a\in\fr{R}$.
\end{definition}

Note that any nil ring of bounded index is nil, and any nilpotent ring also is a nil ring of bounded index.

\begin{definition}
Let $\fr{R}$ be an $\f{S}$-graded ring. Then $\fr{R}$ is called \textbf{$\f{S}$-nil} if all its homogeneous components are nil, i.e. for any $g\in \f{S}$, we have that any $a\in \fr{R}_g$ is nilpotent. If there exists $k\in\bb{N}$ such that $a^k=0$ for any homogeneous element $a\in\fr{R}$, then $\fr{R}$ is called \textbf{$\f{S}$-nil of bounded index}.
\end{definition}

Notice that if $\fr{R}$ is nil (of bounded index) and has an $\f{S}$-grading, then necessarily $\fr{R}$ is $
\f{S}$-nil (of bounded index).

Finally, let us consider a generalization of commutativity introducing the notion of $\f{f}$-commutativity.

\begin{definition}\label{1.70}
Consider a semigroup\footnote{ A \textit{semigroup} $\fr{S}$ is a nonempty set together with a binary operation from $\fr{S}\times\fr{S}$ to $\fr{S}$ which is associative.} $\fr{S}$, and an associative ring $\fr{R}$. A \textbf{left action} of $\fr{S}$ on $\fr{R}$ is a mapping $\cdot:\fr{S}\times\fr{R}\longrightarrow\fr{R}$ satisfying 
	\begin{equation}\nonumber
	(\lambda\gamma)\cdot x=\lambda(\gamma \cdot x) \ \mbox{ and } \ \lambda\cdot (xy)=(\lambda \cdot x)y \ ,
	\end{equation}
for any $\lambda,\gamma\in \fr{S}$ and $x,y\in\fr{R}$. This action is called an \textbf{action by semigroup}.
\end{definition}

Consider any application $\f{f}:\fr{R}\times \fr{R}\rightarrow \fr{S}$, and define the \textbf{$\f{f}$-commutator} of $\fr{R}$ by
	\begin{equation}\nonumber
	[a,b]_{\f{f}}=ab-\f{f}(a,b) \cdot ba
	\ ,
	\end{equation}
for any $a,b \in\fr{R}$. 
	
Observe that, given a ring $\fr{R}$ and a subring $P$ of $\fr{R}$, any map $\f{f}$ from $\fr{R}\times\fr{R}$ to $P$ ensures that $[a,b]_{\f{f}} = ab-\f{f}(a,b)ba$, for any $a,b\in\fr{R}$, is an $\f{f}$-commutator of $\fr{R}$.

	\begin{definition}
An associative ring $\fr{R}$ is called an \textbf {$\f{f}$-commutative ring} if there exist a semigroup $\fr{S}$ that acts on the left of $\fr{R}$, and a mapping $\f{f}:\fr{R}\times\fr{R}\longrightarrow \fr{S}$ such that $[a,b]_{\f{f}}=0$ for any $a,b\in\fr{R}$.
	\end{definition}

Note that all commutative rings, anti-commutative rings, and the nilpotent rings of index $2$ belong to the class of all $\f{f}$-commutative rings. An interesting question is whether every ring is $\f{f}$-commutative for some $\f{f}$. To answer this question, we need some tools. In fact, in Example \ref{3.22}, we exhibit a ring which is not $\f{f}$-commutative, for any $\f{f}$. In general, if for any $a,b\in \fr{R}$, the equation $xba=ab$ has a solution in some semigroup $\fr{S}$, which acts on $\fr{R}$ from the left, then $\fr{R}$ belongs to class of all $\f{f}$-commutative rings.

\begin{example}
Given any ring $\fr{R}$, and assuming that 
	\begin{equation}\nonumber
\lambda a=\underbrace{a+\cdots+a}_{\lambda-times} \ , \ \ \gamma a=(-\gamma) (-a)=\underbrace{(-a)+\cdots+(-a)}_{(-\gamma)-times} \ , \  \mbox{ and } \ 0 a=0 a \ ,
	\end{equation}
for any $a\in\fr{R}$ and $\lambda, \gamma\in\bb{Z}$, with $\lambda>0$ and $\gamma<0$, we have that $\bb{Z}$ acts on the left of $\fr{R}$ naturally. We can consider for each $\lambda\in\bb{Z}$ the mapping $\f{\lambda}$ satisfying $\f{\lambda}(a,b)=\lambda$ for any $a,b\in\fr{R}$, and hence, the $\lambda$-commutative $[\ ,\ ]_{\lambda}$ is well defined. In particular, for $\lambda=1$, take $\f{1}(a,b)=1$ for any $a,b\in\fr{R}$, and thus, the $\f{1}$-commutator $[\ ,\ ]_1$ is given by $[a,b]_1=ab-1\cdot ba=ab-ba=[a,b]$, and so $[\ ,\ ]_1=[\ , \ ]$. On the other hand, when $\lambda=0$, take $\f{0}(a,b)=0$ for any $a,b\in\fr{R}$, and hence, the $\f{0}$-commutator $[\ ,\ ]_0$ is given by $[a,b]_0=ab-0\cdot ba=ab$, and so $[\ ,\ ]_0$ is the product of $\fr{R}$.
	\end{example}


\section{Graded rings with nil neutral component}

In this section, we present some important results concerning $\f{S}$-graded rings with the nil neutral component. Unless otherwise stated, in this section we denote by $\fr{R}$ an associative ring with an $\f{S}$-grading given by $\Gamma: \fr{R}=\bigoplus_{g\in \f{S}} \fr{R}_g$, where $\f{S}$ is an arbitrary left cancellative monoid. We also assume that $\Gamma$ has a finite support, namely $|\f{Supp}(\Gamma)|=d<\infty$.

Let $\fr{R}$ be an $\f{S}$-graded ring. Note that to prove that $\fr{R}$ is nil/nilpotent, it is sufficient to analyze only products of its homogeneous elements. In fact, given $a_1,a_2,\dots,a_k \in \fr{R}$, we can write $a_i=\sum_{j=1}^d a_{ig_j}$, where $a_{ig_j}\in \fr{R}_{g_j}$ and $\f{Supp}(\Gamma)=\{g_1,\dots,g_d\}$. Hence, we have
\begin{align}\label{3.01}
a_1 a_2 \cdots a_k &=\left(\sum_{j_1=1}^d a_{1g_{j_1}} \right)\left(\sum_{j_2=1}^d a_{2g_{j_2}} \right)\cdots \left(\sum_{j_k=1}^d a_{kg_{j_k}} \right) \\
	&= \sum_{j_1,j_2,\dots,j_k =1}^d a_{1g_{j_1}} a_{2g_{j_2}} \cdots a_{kg_{j_k}} \nonumber .
\end{align}
Therefore, without loss of generality, we study only the products of homogeneous elements in the grading of $\fr{R}$.

\begin{remark}\label{3.02}
 Let $a_1, a_2,\dots, a_n\in\fr{R}$ be  homogeneous elements. Note that if $\f{deg}(a_i a_{i+1}\cdots a_{i+l})\notin \f{Supp}(\Gamma)$ for some $i,l=1,\dots,n$, then $a_1 a_2\cdots a_n=0$, since $\fr{R}$ is an associative ring. From this, put $\f{deg}(a_i)=g_i$ for each $i=1,2,\dots,n$, and consider the subset of $\f{S}$
$$
\Lambda_{(g_1,\dots,g_n)}\eq \{g_{i}g_{i+1}\cdots g_{i+m} : i=1,\dots,n, 0\leq m\leq n-i\}.
$$
If $\Lambda_{(g_1,\dots,g_n)}\nsubseteq \f{Supp}(\Gamma)$, then $a_1 a_2\cdots a_n=0$. Therefore, if $a_1 a_2\cdots a_n \neq0$ for some homogeneous elements $a_1, a_2,\dots, a_n\in\fr{R}$, then $\Lambda_{(g_1,\dots,g_n)}\subseteq \f{Supp}(\Gamma)$, where $g_i=\f{deg}({a_i})$ for all $i=1,\dots,n$. 
\end{remark}

Being $\f{Supp}(\Gamma)=d<\infty$, observe that if $g\in\f{Supp}(\Gamma)$, then either $(\fr{R}_g)^{d+1}=\{0\}$ or $e\in\f{Supp}(\Gamma)$, where $e$ is the neutral element of $\f{S}$. In fact, suppose that $e\notin\f{Supp}(\Gamma)$. By contradiction, suppose also that there exist $a_1, a_2,\dots, a_{d+1}\in \fr{R}_g$ such that $a_1 a_2\cdots a_{d+1}\neq0$. Hence, $\{g,g^2, \dots, g^{d+1}\}\subset \f{Supp}(\Gamma)$, since $\fr{R}$ is an associative ring. But $|\f{Supp}(\Gamma)|=d$, and thus, there exist $1\leq l<t\leq d+1$ such that $g^t=g^l$, and hence, $e=g^{t-l}\in\f{Supp}(\Gamma)$, because $\f{S}$ is a left cancellative monoid and $1\leq t-l\leq d$. From this, we obtain a contradiction. Therefore, for any $g\in\f{Supp}(\Gamma)$, it follows that $(\fr{R}_g)^{d+1}=\{0\}$ when $e\notin\f{Supp}(\Gamma)$.

The following result ensures that any $\f{S}$-graded non-nilpotent ring has necessarily some nonzero homogeneous element of degree $e$.

\begin{proposition}\label{3.03}
Let $\fr{R}$ be a ring with a finite $\f{S}$-grading $\Gamma$, where $\f{S}$ is a left cancellative monoid. If $\fr{R}_e=\{0\}$, then $\fr{R}^{d+1}=\{0\}$, where $d=|\f{Supp}(\Gamma)|$.
\end{proposition}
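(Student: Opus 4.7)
The plan is to combine the decomposition in (3.01) with a pigeonhole argument on prefix products, using only left cancellativity and the observation already recorded in Remark~\ref{3.02}.

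First, by the multilinear expansion (3.01), it suffices to show that every product $a_1 a_2 \cdots a_{d+1}$ of homogeneous elements vanishes; so fix such a product with $a_i \in \fr{R}_{g_i}$ and $g_i \in \f{Supp}(\Gamma)$ (otherwise $a_i = 0$ already). I will argue by contradiction: suppose $a_1 a_2 \cdots a_{d+1} \neq 0$. Then by Remark~\ref{3.02}, the set of degrees of all consecutive subproducts, $\Lambda_{(g_1,\ldots,g_{d+1})}$, must lie inside $\f{Supp}(\Gamma)$.

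The key step is to look at the $d+1$ prefix degrees
\[
h_i \eq g_1 g_2 \cdots g_i, \qquad i=1, 2, \dots, d+1,
\]
which are all elements of $\Lambda_{(g_1,\ldots,g_{d+1})} \subseteq \f{Supp}(\Gamma)$. Since $|\f{Supp}(\Gamma)|=d$, the pigeonhole principle yields indices $i<j$ with $h_i = h_j$, i.e.\
\[
g_1 g_2 \cdots g_i \;=\; g_1 g_2 \cdots g_i \cdot (g_{i+1} \cdots g_j).
\]
Applying left cancellation in $\f{S}$ (peeling off $g_1, g_2, \dots, g_i$ one at a time) gives $g_{i+1} g_{i+2} \cdots g_j = e$. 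But $g_{i+1} \cdots g_j$ is again a consecutive subproduct, so it lies in $\Lambda_{(g_1,\ldots,g_{d+1})} \subseteq \f{Supp}(\Gamma)$, forcing $e \in \f{Supp}(\Gamma)$. This contradicts $\fr{R}_e = \{0\}$.

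I do not expect any real obstacle: the hypothesis $\fr{R}_e=\{0\}$ is used exactly to exclude $e$ from $\f{Supp}(\Gamma)$, the left cancellation property of $\f{S}$ is used precisely to convert the pigeonhole collision into the statement $g_{i+1}\cdots g_j = e$, and the bound $d+1$ is tight because we need one more prefix than the size of the support. The only delicate point is making sure that the intermediate degrees are genuinely forced into $\f{Supp}(\Gamma)$, but this is exactly the content of Remark~\ref{3.02}, so no new work is required.
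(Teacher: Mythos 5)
Your proof is correct and follows essentially the same route as the paper: reduce to homogeneous elements via (\ref{3.01}), use Remark \ref{3.02} to force all consecutive subproduct degrees into $\f{Supp}(\Gamma)$, apply the pigeonhole principle to the $d+1$ prefix degrees $g_1\cdots g_i$, and invoke left cancellativity to conclude $e\in\f{Supp}(\Gamma)$, contradicting $\fr{R}_e=\{0\}$. No gaps.
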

\begin{proof}
Suppose that $e\notin \f{Supp}(\Gamma)$, and write $n\eq d+1$. Let us show that $\fr{R}^n=\{0\}$. For this purpose, it is sufficient to prove that $a_1 a_2\cdots a_n=0$ for all homogeneous elements $a_1,a_2,\dots,a_n \in \fr{R}$ (see (\ref{3.01})).

By contradiction, suppose that there exist homogeneous elements $a_1,a_2,\dots,a_n \in \fr{R}$ such that $a_1 a_2 \cdots a_n\neq 0$. Put $\f{deg}(a_i)=g_i$ for  $i=1,\dots, n$, and define $\Lambda\eq \Lambda_{(g_1,g_2, \dots, g_n)}$ (as in Remark \ref{3.02}). Hence, by Remark \ref{3.02}, we have $\Lambda\subseteq \f{Supp}(\Gamma)$, and since $|\f{Supp}(\Gamma)|=d$, it follows that $|\Lambda|\leq d<n$. Notice that $\{g_1, g_1 g_2, \dots, g_1 g_2 \cdots g_n\}\subseteq \Lambda$, and hence, we conclude that there exist $1\leq l<t \leq n$ such that
	\begin{equation}\nonumber
g_1 g_2\cdots g_l=(g_1 g_2\cdots g_{l})g_{(l+1)}\cdots g_t
.	\end{equation} 
Thus, since $\f{S}$ is left cancellative, it follows that $e=g_{l+1}\cdots g_t \in\Lambda\subseteq\f{Supp}(\Gamma)$. This contradicts our assumption. Therefore, we prove that $a_1 a_2 \cdots a_n= 0$ for all homogeneous elements $a_1,a_2,\dots,a_n \in \fr{R}$, and hence, by (\ref{3.01}), we conclude that $\fr{R}^n=\{0\}$. Consequently, $\fr{R}$ is nilpotent of index at most $n=d+1$.
\end{proof}

Besides ensuring that any non-nilpotent $\f{S}$-graded ring has at least one nonzero neutral homogeneous element, when $\fr{R}$ has a finite $\f{S}$-grading $\Gamma$ with a null neutral component, the previous proposition provides an upper bound for the nilpotency index $\f{nd}(\fr{R})$ depending only on the support order of $\Gamma$. Observe that the ring $\fr{R}=SUT_n(\bb{F})$, of the strictly upper triangular matrices of order $n\times n$ over arbitrary field $\bb{F}$, with its naturally $\bb{Z}$-grading, is a nilpotent ring whose nilpotency index is exactly the one given by Proposition \ref{3.03}. Now, consider the ring $\fr{R}=\bb{R}[x]$, of all the real polynomials in one variable. Note $\fr{R}$ has naturally a $\bb{Z}$-grading of the infinite support. The subset  $\tilde{\fr{R}}=\{p(x)\in\fr{R} : p(0)=0\}$ of $\fr{R}$ is a $\bb{Z}$-graded ring (with the $\bb{Z}$-grading induced by the $\bb{Z}$-grading of $\fr{R}$) such that $\tilde{\fr{R}}_0=\{0\}$, but its support is not finite and $(\tilde{\fr{R}})^n\neq\{0\}$ for all $n\in\bb{N}$, since $x^n\in\tilde{\fr{R}}_n$.


In the proof of the previous proposition, we have used combinatorial arguments. Evidently, the techniques used in Proposition \ref{3.03} can be extended to answer the following question: ``what can we say about $\fr{R}$ when $\fr{R}_e$ is nil?''. Thus, one of the most natural question is the following:

\begin{problem}\label{problem01}
	Given a ring $\fr{R}$ with a finite $\f{S}$-grading, does $\fr{R}_e$ nil imply $\fr{R}$ nil?
\end{problem}

A problem similar to Problem \ref{problem01} for infinite support is not valid, in general. In fact, the following example ensures the existence of an $\f{S}$-graded ring with an infinite support, which is not nil, although its neutral component is nil. However, the Corollaries \ref{3.04} and \ref{3.32} give some conditions for the infinite support case of Problem \ref{problem01} to have a positive solution.

\begin{example}[Theorem 2.7, \cite{Smok00}] 
For every countable field $\bb{K}$ there is an associative nil $\bb{K}$-algebra $N$ such that the polynomial ring in one indeterminate over $N$ (which is naturally $\bb{Z}$-graded with the neutral component equal to $N$) is not nil.
\end{example}

Let us now present some results concerning to Problem \ref{problem01}.

\begin{proposition}\label{3.31}
Let $\f{S}$ be a left cancellative monoid and $\fr{R}$ a ring with an $\f{S}$-grading $\Gamma$ of finite support, namely $|\f{Supp}(\Gamma)|=d$. Suppose that $\fr{R}_e$ is a nonzero nil ring. Then the following items are true:
	\begin{itemize}
		\item[i.] $\fr{R}$ is an $\f{S}$-nil ring;
		\item[ii.] Suppose $\fr{R}_e$ is nil of bounded index, namely $\f{nd}_{nil}(\fr{R}_e)=s$. Then: 
		\begin{enumerate}
			\item[a)] $\left(a_1 a_2 \cdots a_{k_g} \right)^s = 0$ for any $g\in \f{Supp}(\Gamma)$ and $a_1, a_2, \dots, a_{k_g}\in\fr{R}_g$, where ${k_g} \eq \f{min}\{\f{o}(g) , d\}$;
		 	\item[b)] there exists $k\in\bb{N}$ such that $\left(a_1 a_2 \cdots a_{k} \right)^s = 0$ for any homogeneous elements $a_1, a_2, \dots, a_k$ of the same homogeneous degree;
		 	\item[c)] $\fr{R}$ is $\f{S}$-nil of bounded index.
		\end{enumerate}
	\end{itemize}
\end{proposition}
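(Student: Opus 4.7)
The plan is to show all four claims by tracking the sequence of powers $g, g^2, \ldots$ of a homogeneous degree $g$ and exploiting a pigeonhole argument based on $|\f{Supp}(\Gamma)|=d$ together with left cancellation, in the spirit of the proof of Proposition \ref{3.03}. The key observation, which will be used repeatedly, is: among $g, g^2, \ldots, g^{d+1}$, either two powers coincide (forcing $\f{o}(g) \leq d$ by left cancellation), or the powers are pairwise distinct and therefore at least one of them lies outside $\f{Supp}(\Gamma)$.

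For part (i), given $a \in \fr{R}_g$ I would split on these two cases. If $\f{o}(g)= m \leq d$, then $a^m \in \fr{R}_{g^m} = \fr{R}_e$, and nilness of $\fr{R}_e$ gives $(a^m)^N=0$ for some $N$. Otherwise $g,g^2,\dots,g^{d+1}$ are distinct and some $g^i \notin \f{Supp}(\Gamma)$, whence $a^i \in \fr{R}_{g^i}=\{0\}$. For part (ii)(a), the same dichotomy applies to $b\eq a_1\cdots a_{k_g}$, where $a_j\in\fr{R}_g$. If $\f{o}(g)\leq d$ then $k_g=\f{o}(g)$ and $b\in\fr{R}_e$, so $b^s=0$ by the nil index hypothesis. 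If $\f{o}(g)>d$ then $k_g=d$; here I claim that in fact $a_1\cdots a_d=0$. Indeed, take $i_0$ minimal with $g^{i_0}\notin \f{Supp}(\Gamma)$, $2\leq i_0\leq d+1$. If $i_0\leq d$ the product factors through an annihilating segment. The remaining possibility $i_0=d+1$ would force $\f{Supp}(\Gamma)=\{g,g^2,\dots,g^d\}$, which contradicts $e\in\f{Supp}(\Gamma)$ (valid because $\fr{R}_e$ is assumed nonzero) since $e\neq g^j$ for $1\leq j\leq d<\f{o}(g)$. Hence $i_0\leq d$ and $b=0$, so trivially $b^s=0$.

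For part (ii)(b), I would choose a uniform exponent $k$ that works for every $g\in\f{Supp}(\Gamma)$: take, for instance, $k\eq\mathrm{lcm}(1,2,\dots,d)$ (or $k=d!$), which satisfies $k\geq d$ and $\f{o}(g)\mid k$ whenever $\f{o}(g)\leq d$. Then for $g$ of finite order $\leq d$ the product $a_1\cdots a_k$ lies in $\fr{R}_{g^k}=\fr{R}_e$, so its $s$-th power vanishes; for $g$ of order exceeding $d$, part (a) already gives $a_1\cdots a_d=0$, so $a_1\cdots a_k=0$. Finally, part (ii)(c) is immediate from (a): specializing $a_1=\cdots=a_{k_g}=a$ yields $a^{k_g s}=0$, and since $k_g\leq d$ we conclude $a^{ds}=0$ uniformly on all homogeneous elements.

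The main technical delicacy I foresee is the sub-case $i_0=d+1$ in part (a), which is where one must bring in the standing assumption that $\fr{R}_e$ is \emph{nonzero}; without it the argument collapses, which is consistent with the example preceding the statement (a nil $\bb{K}$-algebra $N$ for which $N[x]$ is not nil) where the $\bb{Z}$-grading has infinite support and part (a)'s bound $k_g\leq d$ is unavailable. Once the correct degree-counting argument is set up, the remaining items (b) and (c) are formal consequences.
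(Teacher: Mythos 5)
Your proposal is correct and takes essentially the same route as the paper's proof: pigeonhole on the powers $g,g^2,\dots$ inside the finite support together with left cancellation, the dichotomy $\f{o}(g)\leq d$ versus $\f{o}(g)>d$ (the bad sub-case excluded via $e\in\f{Supp}(\Gamma)$, which is where the hypothesis $\fr{R}_e\neq\{0\}$ enters, exactly as in the paper), an lcm choice of $k$ for (ii.b), and specialization $a_1=\cdots=a_{k_g}=a$ for (ii.c). The only cosmetic differences are your uniform $k=\mathrm{lcm}(1,\dots,d)$ in place of the paper's $\f{lcm}\{k_g : g\in\f{Supp}(\Gamma)\}$, and deriving (ii.c) directly from (ii.a) with the bound $a^{ds}=0$ rather than via (ii.b).
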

\begin{proof}
{i.} Firstly, we have that $e\in\f{Supp}(\Gamma)$, since $\fr{R}_e\neq\{0\}$. Without loss of generality we can take any $g\in \f{Supp}(\Gamma)-\{e\}$, since $\fr{R}_e$ is nil. Put $n=\f{min}\{\f{o}(g), d\}$ and consider the subset $\beta=\{g, g^2, \dots, g^n\}$ of $\f{S}$. Notice that if $\beta \nsubseteq \f{Supp}(\Gamma)$, then $a^n=0$ for any $a\in\fr{R}_g$, since $\fr{R}$ is an associative ring. For this reason, we can assume $\beta\subseteq\f{Supp}(\Gamma)$. It follows that either $g^n=e$ or $e\notin \beta$. In fact, $e\in \beta$ implies that $g^r=e$ for some $r\in\{1,\dots,n\}$. By definition of $\f{o}(g)$, we have $\f{o}(g)\leq r$. Thus, $r=n$ and $g^n=e$, since $n\leq \f{o}(g)$ and $r\leq n$. 

If $g^n=e$, then for any $x\in\fr{R}_g$, we have $x^n\in \fr{R}_e$, and hence, $x$ is nilpotent, since $\fr{R}_e$ is nil. On the other hand, if $e\notin \beta$, we have that $n=d<\f{o}(g)$ and $\beta$ has $d$ different elements, because $g^r\neq e$ for all $1\leq r\leq n$, $\f{S}$ is left cancellative and $e\notin\beta\subseteq \f{S}$. Hence, $\beta=\f{Supp}(\Gamma)$, since $|\beta|=d$ and $\beta\subseteq\f{Supp}(\Gamma)$. From this, it follows that $e\notin\f{Supp}(\Gamma)$, 
which contradicts the initial claim.

Anyway, we show that any element of $\fr{R}_g$ is nilpotent, for all $g\in \f{S}$. Therefore, $\fr{R}$ is $\f{S}$-nil.

{ii.a)} Since $\fr{R}_e\neq\{0\}$, we have $\f{nd}(\fr{R}_e)>1$. Fix any $g\in \f{Supp}(\fr{R})$. If $g=e$, the result is obvious. Assume that $g\neq e$. Notice that $g^l \notin \f{Supp}(\Gamma)$ for some $l\in \bb{N}$ implies that $(\fr{R}_g)^l=\{0\}$. Define ${k_g} \eq \f{min}\{\f{o}(g) , d\}$ for any $g\in\f{Supp}(\fr{R})$, where $d = |\f{Supp}(\Gamma)|$. Consider $\gamma=\{g,g^2, \dots, g^{k_g}\}$. Let us show that either $k_g=\f{o}(g)$ or $\gamma\nsubseteq \f{Supp}(\Gamma)$. Indeed, suppose that ${k_g}\neq \f{o}(g)$, and hence, $d=k_g<\f{o}(g)$. Since $k_g<\f{o}(g)$, it is easy to see that $e\notin\gamma$, and all elements of the set $\gamma$ are different, because $\f{S}$ is left cancellative. Then $|\gamma|=d=|\f{Supp}(\Gamma)|$, and for this reason, we can conclude that $\gamma\nsubseteq\f{Supp}(\Gamma)$ when ${k_g}\neq \f{o}(g)$, since $e\in \f{Supp}(\Gamma)$, but  $e\notin\gamma$.

Let $a_1, a_2, \dots, a_{k_g}\in\fr{R}_g$. If $k_g=\f{o}(g)$, then $a_1 a_2 \cdots a_{k_g}\in\fr{R}_e$. Otherwise, if $\gamma\nsubseteq \f{Supp}(\Gamma)$, then there exists $1\leq l\leq k_g$ such that $g^l\in\gamma-\f{Supp}(\Gamma)$, and hence, $a_1 a_2\cdots a_{l}=0$, and consequently, $a_1 a_2\cdots a_{k_g}=(a_1 a_2\cdots a_l) a_{l+1}\cdots a_{k_g}=0$.

Furthermore, we show that for any $g\in \f{Supp}(\Gamma)$ and $a_1, a_2, \dots, a_{k_g} \in \fr{R}_g$, where $k_g=\f{min}\{\f{o}(g), |\f{Supp}(\Gamma)|\}$, we have either $a_1 a_2\cdots a_{k_g} \in \fr{R}_e$ or $a_1 a_2\cdots a_{k_g}=0$. Therefore, in any case, we conclude that $\left(a_1 a_2\cdots a_{k_g} \right)^s=0$, since $\f{nd}_{nil}(\fr{R}_e)=s$.

{ii.b)} By the arguments of {(ii.a)}, it is sufficient to take $k\eq \f{lcm} \{k_g : g\in\f{Supp}(\Gamma)\}$, since $\f{Supp}(\Gamma)$ is finite.

{ii.c)} In the item {(ii.b)}, for any $g\in\f{Supp}(\Gamma)$, take $a_1=a_2=\cdots=a_k=a\in\fr{R}_g$, and hence, $a^{ks}=0$. The result follows.
\end{proof}

Notice that when we assume that $\fr{R}_e$ is nil, the previous proposition exhibits consequences only for homogeneous components. From now on, we will show more general results, i.e. not only for homogeneous components.

\begin{lemma}\label{3.05}
Let $\f{S}$ be a left cancellative monoid and $\fr{R}$ a ring with an $\f{S}$-grading $\Gamma$ with a finite support of order $d$. For any integer $r>1$ and any homogeneous elements $a_1, a_2, \dots, a_{rd}\in\fr{R}$,  we have that either $a_1 a_2 \cdots a_{rd}=0$ or there exist $0\leq s_0< s_1 < \cdots < s_r \leq rd$ satisfying
	\begin{align}\label{3.06}
e= \f{deg}(a_{s_0+1} \cdots a_{s_1}) = \f{deg}(a_{s_1+1} \cdots a_{s_2}) = \cdots = \f{deg}(a_{s_{r-1}+1} \cdots a_{s_r})
.	\end{align}
\end{lemma}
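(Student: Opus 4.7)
The plan is to track the sequence of partial-product degrees in $\f{S}$ and extract the block decomposition by a pigeonhole argument on $\f{Supp}(\Gamma)$, converting each repeated value into a block of degree $e$ by left cancellativity of $\f{S}$.

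First I would dispose of the case $e\notin\f{Supp}(\Gamma)$. If $d=0$ then $\fr{R}=\{0\}$ and the conclusion is vacuous; otherwise $d\geq 1$, so Proposition \ref{3.03} gives $\fr{R}^{d+1}=\{0\}$, and since $r\geq 2$ forces $rd\geq 2d\geq d+1$, the product $a_1 a_2\cdots a_{rd}$ vanishes and the statement holds trivially. Therefore I may assume $e\in\f{Supp}(\Gamma)$ and, working toward the second alternative, that $a_1 a_2\cdots a_{rd}\neq 0$.

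Write $g_i\eq\f{deg}(a_i)$. Since no contiguous subproduct of a nonzero product of homogeneous elements can vanish, every partial subproduct $g_ig_{i+1}\cdots g_j$ lies in $\f{Supp}(\Gamma)$ (cf.\ Remark \ref{3.02}). Setting $h_0\eq e$ and $h_i\eq g_1g_2\cdots g_i$ for $i=1,\dots,rd$, the assumption $e\in\f{Supp}(\Gamma)$ together with the previous observation places all $rd+1$ elements $h_0,h_1,\dots,h_{rd}$ inside $\f{Supp}(\Gamma)$, a set of cardinality $d$. By the pigeonhole principle, some value is attained at least $\lceil (rd+1)/d\rceil=r+1$ times, so there exist indices $0\leq s_0<s_1<\cdots<s_r\leq rd$ with $h_{s_0}=h_{s_1}=\cdots=h_{s_r}$.

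To finish, for each $k=1,\dots,r$ the equality $h_{s_{k-1}}=h_{s_k}$ can be rewritten as $(g_1\cdots g_{s_{k-1}})\cdot e=(g_1\cdots g_{s_{k-1}})\cdot(g_{s_{k-1}+1}\cdots g_{s_k})$, and iterated left cancellation in $\f{S}$ yields $e=g_{s_{k-1}+1}\cdots g_{s_k}=\f{deg}(a_{s_{k-1}+1}\cdots a_{s_k})$, which is exactly (\ref{3.06}). The main subtlety is the pigeonhole count itself: one must observe that $rd+1$ values in $d$ bins force \emph{$r+1$} coincident entries (not merely $r$), since it takes $r+1$ indices $s_0<\cdots<s_r$ to carve out the $r$ consecutive blocks demanded by the statement; the slack comes from the single extra element $h_0=e$ beyond the $rd$ values $h_1,\dots,h_{rd}$.
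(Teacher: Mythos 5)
Your proof is correct and follows essentially the same strategy as the paper: record the degrees of the initial segments $g_1\cdots g_i$, which all lie in $\f{Supp}(\Gamma)$ by Remark \ref{3.02}, force a repetition by pigeonhole, and convert consecutive coincidences into blocks of degree $e$ via left cancellativity of $\f{S}$ (one cancellation of the single element $g_1\cdots g_{s_{k-1}}$ suffices), with the case $e\notin\f{Supp}(\Gamma)$ handled by Proposition \ref{3.03} exactly as in the paper. The only real difference is your adjunction of the empty prefix $h_0=e$ (legitimate since $e\in\f{Supp}(\Gamma)$ in the remaining case), which gives $rd+1$ values in $d$ bins and hence $r+1$ coincidences in a single pigeonhole count, thereby collapsing the paper's two-case analysis ($\lambda_e\geq r$ versus $\lambda_{g_0}\geq r+1$ for some $g_0\neq e$, extracted from the inequality $rd\leq\lambda_e+(d-1)\lambda_{g_0}$) into one uniform argument.
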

\begin{proof}
By Proposition \ref{3.03},  if $e\notin\f{Supp}(\Gamma)$, then $\fr{R}^{d+1}=\{0\}$. From this, the result follows, since $d+1\leq dr$ for all $r>1$ in $\bb{N}$. In this case, observe that we have the first alternative.

Now, assume that $e\in\f{Supp}(\Gamma)$. Suppose that there exist homogeneous elements $a_1, a_2, \dots, a_{rd}\in\fr{R}$ such that $a_1 a_2 \cdots a_{rd}\neq 0$. Let us show that there exist $0\leq s_0< s_1 < \cdots < s_r \leq rd$ such that (\ref{3.06}) holds. Put $\f{deg}(a_i)=g_i$ for each $i=1,2,\dots, rd$. For all $1\leq l\leq k\leq rd$, define $b_{l,k}=a_l a_{l+1}\dots  a_k$, $b_k=b_{1,k}$, and $b_{l,l}=a_l$. It is easy to see that  
	\begin{equation}\label{3.07}
\f{deg}(b_{l,k})=\f{deg}(a_l) \f{deg}(a_{l+1})\cdots \f{deg}(a_k)= g_l g_{l+1} \cdots  g_k ,
	\end{equation}
for all $1\leq l \leq  k\leq rd$. Since $a_1 a_2\dots a_{rd}\neq0$, it follows that $\Lambda\eq \{\f{deg}(b_{l,k}) : 1\leq l \leq k\leq rd \}=\Lambda_{(g_1, g_2,\dots, g_{rd})}$ is contained in $\f{Supp}(\Gamma)$ (see Remark \ref{3.02}). Now, consider the subset $\tilde{\Lambda} \eq \{\f{deg}(b_i) : i=1,2,\dots, rd\}$ of $\Lambda$, and notice that 
\begin{equation}\label{3.08}
|\tilde{\Lambda}|\leq \left\{
\begin{array}{c l}	
     d-1 ,& \mbox{if } e\notin\tilde{\Lambda} \\
     d ,& \mbox{if } e\in\tilde{\Lambda}
 \end{array}\right. \ ,
\end{equation}
since $\tilde{\Lambda} \subseteq \f{Supp}(\Gamma)$, $e\in\f{Supp}(\Gamma)$ and $|\f{Supp}(\Gamma)|=d$. For each $g\in\tilde{\Lambda}$, consider the integer $\lambda_g \eq |\{i: \f{deg}(b_i)=g\}|$, and assume $\lambda_g=0$ for any $g\notin\tilde{\Lambda}$. Take $g_0\in\tilde{\Lambda}$ such that $\lambda_{g_0}= \f{max}\{\lambda_g : g\in \tilde{\Lambda}, g\neq e\}$. Let us show that either $\lambda_e\geq r$ or 
$\lambda_{g_0}\geq r+1$. 

Firstly, note that $\{i: \f{deg}(b_i)=g\}\cap \{j: \f{deg}(b_j)=h\}=\emptyset$ for any $g\neq h$, and hence, 
	\begin{equation}\label{3.09}
rd = \left|\bigcup_{g\in \tilde{\Lambda}} \{i: \f{deg}(b_i)=g\}\right| =\sum_{g\in \tilde{\Lambda}} |\{i: \f{deg}(b_i)=g\}| =\sum_{g\in \tilde{\Lambda}}\lambda_g
\ .	\end{equation}
Then, by (\ref{3.08}) and (\ref{3.09}), we have 
\begin{equation}\label{3.10}
	rd=\lambda_e + \sum_{g\in \tilde{\Lambda}-\{e\}}\lambda_g \leq \lambda_e + \sum_{g\in \tilde{\Lambda}-\{e\}} \lambda_{g_0} \leq \lambda_e + (d-1)\lambda_{g_0} 
\ .\end{equation}

If $e\notin\tilde{\Lambda}$, then $\lambda_e=0$, and hence, by (\ref{3.10}), it follows that $rd\leq (d-1) \lambda_{g_0}$, which implies that $\lambda_{g_0}>r$. 

Suppose now that $e\in\tilde{\Lambda}$. Assume that $\lambda_g < (r+1)$ for any $g\in \tilde{\Lambda}-\{e\}$. Hence, $\lambda_{g_0}\leq r$, and by (\ref{3.10}), we have $rd \leq \lambda_e + (d-1)r$, 
and thus, $\lambda_e\geq rd-(d-1)r=r$. 
From this, we deduce that $\lambda_{g_0}< (r+1)$ implies $\lambda_e\geq r$.
	
Therefore, we show that either $\lambda_e\geq r$ or there exists at least one $g_0\in \tilde{\Lambda}-\{e\}$ such that $\lambda_{g_0}\geq r+1$. 
	 
Suppose that $\lambda_e\geq r$. Take $1\leq i_1< \dots < i_r \leq rd$ such that $\f{deg}(b_{i_j})=e$ for all $j=1,\dots, r$. Hence, it follows that   
	\begin{align}\label{3.11}
b_{i_r} =& a_1 a_2\cdots  a_{i_r}  \nonumber\\
	   =&(a_1 a_2\cdots  a_{i_1})(a_{(i_1+1)} a_{(i_1+2)}\cdots a_{i_2}) \cdots (a_{(i_{r-1}+1)} a_{(i_{r-1}+2)}\cdots  a_{i_r}) \\
	   =& b_{i_1} b_{(i_1+1), i_2} \cdots b_{(i_{r-1}+1), i_r} . \nonumber
	\end{align}
By (\ref{3.07}), observe that $b_{i_j}=b_{i_{j-1}} b_{(i_{j-1}+1), i_j}$ for all $j=2,\dots, r$, we deduce from (\ref{3.11}) that $e=\f{deg}(b_{i_1})=\f{deg}( b_{(i_1+1), i_2})= \cdots =\f{deg}(b_{(i_{r-1}+1), i_r})$. Thus, putting $s_0=0$, $s_j=i_j$ for $j=1,\dots,r$, we obtain (\ref{3.06}).

Finally, assume that $\lambda_{g_0}\geq r+1$ for some $g_0\in \tilde{\Lambda}-\{e\}$. Take $1\leq i_1<\cdots<i_r<i_{(r+1)}\leq rd$ such that $g_0=\f{deg}(b_{i_1})=\cdots=\f{deg}(b_{i_r})=\f{deg}(b_{i_{(r+1)}})$. Similarly to previous case, we have $b_{i_{(r+1)}}=b_{i_1} b_{(i_1+1), i_2} \cdots b_{(i_{r-1}+1), i_r} b_{(i_{r}+1), i_{(r+1)}}$, and hence, we conclude that $\f{deg}(b_{(i_1+1), i_2})= \cdots =\f{deg}(b_{(i_{r-1}+1), i_r})=\f{deg}( b_{(i_{r}+1), i_{(r+1)}})=e$, since $\f{deg}(b_{i_l})=\f{deg}(b_{i_{l+1}})=g_0$, $b_{i_{l+1}}=b_{i_l}b_{i_l+1 , i_{l+1}}$, for all $l=1,\dots,r$, and $\f{S}$ is left cancellative. Therefore, we obtain (\ref{3.06}) for $s_0=i_1, s_1=i_2, \dots, s_r=i_{r+1}$.
\end{proof}

\begin{remark}\label{3.12}
Under the conditions of Lemma \ref{3.05}, consider any integer $r>1$. Suppose integers $0\leq s_0<s_1<\cdots< s_r\leq rd$ such that (\ref{3.06}) holds. Consider the set $\xi=\{i\in\{1,\dots,r\} : s_i-s_{i-1} > 2d\}$. We have
\begin{equation}\label{3.23}
	\begin{split}
rd &= s_0+\sum_{i=1}^r(s_i-s_{i-1}) +(rd-s_r) \geq \sum_{i=1}^r(s_i-s_{i-1}) \\
& \geq \sum_{i\in\xi}(s_i-s_{i-1})\geq \sum_{i\in\xi} (2d+1)\geq |\xi|(2d+1) \ .
	 \end{split}
\end{equation}
Consider the integer $\hat{r}\in\bb{Z}$, $\hat{r}\geq1$, such that $r\in\{2\hat{r}, 2\hat{r}+1\}$. Observe that $s_i-s_{i-1}\leq 2d$ for at least $\hat{r}+1$ integers $i\in\{1, \dots, r\}$, that is, $r-|\xi|\geq \hat{r}+1$. In fact, firstly suppose $r=2\hat{r}$. Let us show that $|\xi|<\hat{r}$. By contradiction, suppose that $|\xi|\geq \hat{r}\geq1$. By (\ref{3.23}), it follows that
\begin{equation}\nonumber
\begin{split}
rd\geq|\xi|(2d+1)\geq \hat{r} (2d+1) \geq 2\hat{r}d+ \hat{r}\geq rd+1 \ ,
	 \end{split}
\end{equation}
and hence, we obtain a contradiction.

Now, suppose $r=2\hat{r}+1$. By contradiction, assume that $|\xi|\geq \hat{r}+1\geq1$. Again by (\ref{3.23}), we have that
\begin{equation}\nonumber
\begin{split}
rd &\geq|\xi|(2d+1)\geq (\hat{r}+1) (2d+1) \\
&\geq 2\hat{r}d+\hat{r}+2d+1=(2\hat{r}d+d)+1+\hat{r}+d\\
&\geq rd+1+\hat{r}+d\geq rd+1 \ ,
\end{split}
\end{equation}
which is impossible. Therefore, we conclude that $r-|\xi|\geq \hat{r}+1$, for any integer $r\in\{2\hat{r}, 2\hat{r}+1\}$, for any integer $\hat{r}\geq1$.
\end{remark}

Let us now apply the above lemma and the above remark for $\f{f}$-commutative rings. Let us prove that any ring with a finite $\f{S}$-grading is nil if its neutral component is nil and $\f{f}$-commutative.

Given an $\f{f}$-commutative ring $\fr{R}$, consider any monomials $m_1, m_2, m_3\in\fr{R}$, i.e. $m_i$ is the product of elements of $\fr{R}$. For any $x,y,z, t\in\fr{R}$, we have
\begin{equation}\label{3.13}
	\begin{split}
x m_1 y m_2 z m_3 t &=(\f{f}(x, m_1)m_1) xy m_2 z m_3 t \\
					&=(\f{f}(x, m_1)m_1) (\f{f}(xy, m_2)m_2) xyz m_3 t \\
					&=(\f{f}(x, m_1)m_1) (\f{f}(xy, m_2)m_2) (\f{f}(xyz, m_3) m_3) xyzt 
,	\end{split}
\end{equation}
where $\f{f}(x, m_1)m_1, \f{f}(xy, m_2)m_2, \f{f}(xyz, m_3) m_3\in\fr{R}$. We can write also
\begin{equation}\label{3.14}
	\begin{split}
x m_1 y m_2 z m_3 t &=x(\f{f}(m_1, y)y)m_1 m_2 z m_3 t \\
&=x(\f{f}(m_1, y)y)(\f{f}(m_1m_2, z)z) m_1 m_2 m_3 t 
,	\end{split}
\end{equation}
where $\f{f}(m_1, y)y, \f{f}(m_1m_2, z)z\in\fr{R}$. We will use $(\ref{3.13})$ and $(\ref{3.14})$ to prove Theorems \ref{3.15} and \ref{3.20}.

\begin{theorem}\label{3.15}
Let $\f{S}$ be a left cancellative monoid with the neutral element $e$, and $\fr{R}$ an $\f{S}$-graded ring with a finite support $\Gamma$. If $\fr{R}_e$ is nil and $\f{f}$-commutative\index{ring!$\f{f}$-commutative}, then $\fr{R}$ is nil. In addition, if $\fr{R}_e$ is nil of bounded index, then $\fr{R}$ is nil of bounded index.
\end{theorem}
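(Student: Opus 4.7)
The plan is to combine Lemma~\ref{3.05}, Remark~\ref{3.12}, the $\f{f}$-commutativity of $\fr{R}_e$, and a pigeonhole argument on ``short'' blocks. Fix $x\in\fr{R}$ with homogeneous decomposition $x=x_{g_1}+\cdots+x_{g_d}$, where $\f{Supp}(\Gamma)=\{g_1,\ldots,g_d\}$. By (\ref{3.01}), $x^{rd}$ expands as a sum of monomials $a_1 a_2\cdots a_{rd}$ with each $a_i\in\{x_{g_1},\ldots,x_{g_d}\}$; it suffices to show every such monomial vanishes once $r$ is large enough (uniformly in $x$ in the bounded-index case).

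Fix one such monomial. By Lemma~\ref{3.05}, either $a_1\cdots a_{rd}=0$ or there exist $0\leq s_0<s_1<\cdots<s_r\leq rd$ with every block $b_j\eq a_{s_{j-1}+1}\cdots a_{s_j}$ of degree $e$, so $b_j\in\fr{R}_e$. Because these blocks are contiguous, the middle factor $\beta\eq b_1 b_2\cdots b_r$ lies in $\fr{R}_e$, and
$$
a_1\cdots a_{rd}=(a_1\cdots a_{s_0})\cdot \beta\cdot (a_{s_r+1}\cdots a_{rd}),
$$
so it suffices to show $\beta=0$. Remark~\ref{3.12} guarantees that at least $\hat r+1$ of the $r$ blocks are \emph{short} (of length $\leq 2d$), where $r\in\{2\hat r,2\hat r+1\}$.

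Each short block is a word of length at most $2d$ in the finite alphabet $\{x_{g_1},\ldots,x_{g_d}\}$, giving at most $M\eq d+d^2+\cdots+d^{2d}$ distinct short blocks as elements of $\fr{R}_e$. By pigeonhole, once $\hat r$ is sufficiently large in terms of $s$ and $M$, at least $s$ of the short blocks coincide with a single element $B\in\fr{R}_e$. Using $\f{f}$-commutativity inside $\fr{R}_e$, for any $a\in\fr{R}_e$,
$$
Ba\;=\;\f{f}(B,a)\cdot(aB)\;=\;\bigl(\f{f}(B,a)\cdot a\bigr)B,
$$
so $B$ migrates one step rightward leaving itself intact, while the displaced factor $\f{f}(B,a)\cdot a$ stays in $\fr{R}_e$ (since $\fr{S}$ acts on $\fr{R}_e$). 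Iterating to drag all $s$ copies of $B$ past the intervening segments to the right end of $\beta$, we obtain
$$
\beta\;=\;\tilde p_0\,\tilde p_1\cdots \tilde p_s\cdot B^s
$$
with each $\tilde p_i\in\fr{R}_e$. In the bounded-index case set $s=\f{nd}_{nil}(\fr{R}_e)$, which is uniform in $x$; in the general nil case the $M$ candidate short blocks (for fixed $x$) are each nilpotent, so take $s$ to be the maximum of their nil indices. Either way $B^s=0$, hence $\beta=0$, hence $x^{rd}=0$, so $x$ is nilpotent.

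The main technical obstacle is the migration step: we must verify that the $s$ collected copies of $B$ genuinely compose to the honest power $B^s$, rather than to a product of distinct ``twisted'' copies $B,\tilde B,\ldots$ that need not vanish together. The identity $Ba=(\f{f}(B,a)\cdot a)B$ is precisely what makes this work --- the right-hand $B$ is unaltered --- and the pigeonhole bound has to be calibrated against the number $M$ of admissible short block types produced from the finitely many homogeneous components of~$x$, so that one single element $B\in\fr{R}_e$ shows up enough times to reach its nil index.
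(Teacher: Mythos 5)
Your proposal is correct and follows essentially the same route as the paper's proof: reduce to monomials in the homogeneous components of a fixed element, use Lemma \ref{3.05} and Remark \ref{3.12} to extract many degree-$e$ blocks of length at most $2d$, pigeonhole over the finitely many possible short blocks to find enough identical copies $B$, and then use the $\f{f}$-commutativity identity (exactly the manipulation in (\ref{3.13})--(\ref{3.14})) to slide the unaltered copies of $B$ together and obtain $B^s=0$, with $s=\f{nd}_{nil}(\fr{R}_e)$ giving the uniform bound in the bounded-index case. The only differences are bookkeeping of the parameters (the paper fixes $n=r|\tilde{\Lambda}|$ and works with monomials of length $2nd$), so no further comment is needed.
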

\begin{proof}
Let $\Gamma: \fr{R}=\bigoplus_{i=1}^d \fr{R}_{g_i}$ be a finite $\f{S}$-grading on $\fr{R}$ with support given by $\f{Supp}(\Gamma)=\{g_1,g_2,\dots,g_d\}\subseteq\f{S}$. Assume that $\fr{R}_e$ is an $\f{f}$-commutative nil ring. If $e\notin \f{Supp}(\Gamma)$, by Proposition \ref{3.03}, it follows that $\fr{R}^{d+1}=\{0\}$, and the result follows.

Assume that $e\in \f{Supp}(\Gamma)$. Let $a=\sum_{i=1}^d a_{g_i} \in \fr{R}$ be an arbitrary element, with $a_{g_i}\in\fr{R}_{g_i}$. Let us show that $a$ is nilpotent, i.e. there exists $q\in\bb{N}$ such that $a^q=0$. By (\ref{3.01}), it is sufficient to consider only the products of $q$ homogeneous components of $a$. Consider the set 
	\begin{equation}\nonumber
 \Lambda=\{b_1 b_2\cdots b_k : 1\leq k \leq 2d,\  b_1,\dots, b_k\in\{a_{g_1},\dots, a_{g_d}\} \} \ ,
 	\end{equation}
which is finite, and its subset $\tilde{\Lambda}=\{b\in\Lambda : \f{deg}(b)=e\}$. It is clear that $\tilde{\Lambda}\neq\emptyset$ or $b_1b_2\cdots b_{2d}=0$ for any $b_1, b_2,\dots, b_{2d}\in\{a_{g_1}, \dots, a_{g_d} \}$. In fact, by Lemma \ref{3.05}, if $b_1b_2\cdots b_{2d}\neq0$  for some $b_1, b_2,\dots, b_{2d}\in\{a_{g_1}, \dots, a_{g_d} \}$, then there exist $0\leq s_0\leq s_1\leq s_2\leq 2d$ such that $e=\f{deg}(b_{s_0+1}\cdots b_{s_1})=\f{deg}(b_{s_1+1}\cdots b_{s_2})$. We have that $\tilde{\Lambda}$ contains all elements of the neutral degree formed by the products of at most $2d$ elements of the set $\{a_{g_1},\dots, a_{g_d}\}$. Obviously, since $\tilde{\Lambda}$ is finite, $\tilde{\Lambda}$ is contained in $\fr{R}_e$ and $\fr{R}_e$ is nil, we can take $r=\f{min}\{p\in\bb{N} : b^p=0, \forall b\in\tilde{\Lambda}\}$.

Put $n=r|\tilde{\Lambda}|$, and fix any $b_1, b_2, \dots, b_{2nd}\in\{a_{g_1},\dots, a_{g_d}\}$. Let us show that the monomial $m=b_1 b_2 \cdots b_{2nd}$ is equal to zero. 
 
To obtain a contradiction, suppose that $m\neq0$. By Lemma \ref{3.05}, since $m\neq0$, there exist $0\leq s_0<s_1<\cdots< s_{2n}\leq 2nd$ such that 
	\begin{equation}\label{3.16}
c_1=b_{s_0+1} \cdots b_{s_1},\ 
 c_2=b_{s_1+1} \cdots b_{s_2}, 
  \dots,\ 
   c_{2n}=b_{s_{(2n-1)}+1} \cdots b_{s_{2n}}\in\fr{R}_e
\ .	\end{equation}
By Remark \ref{3.12}, there exist $i_1, \dots, i_{n}\in\{1,\dots,2n\}$ satisfying $s_{i_j}-s_{i_j-1}\leq 2d$ for all $j\in\{1,\dots,n\}$, and hence, put $\tilde{c}_k=c_{i_k}$ for all $k=1,\dots, n$. Observe that
	\begin{equation}\label{3.27}
\tilde{c}_1=b_{s_{i_1-1}+1} \cdots b_{s_{i_1}},\
 \tilde{c}_2=b_{s_{i_2-1}+1} \cdots b_{s_{i_2}}, 
  \dots,\
   \tilde{c}_{n}=b_{s_{i_n-1}+1} \cdots b_{s_{i_{n}}}\in\tilde{\Lambda}
\ .
	\end{equation}

Since $\tilde{c}_1, \dots, \tilde{c}_{n}\in\tilde{\Lambda}$, and $n=r|\tilde{\Lambda}|$, where $\tilde{\Lambda}$ is a finite set, it follows that there exist $1\leq j_1<j_2< \cdots< j_r\leq n$ such that $\tilde{c}_{j_1}= \tilde{c}_{j_2}= \cdots= \tilde{c}_{j_r}=c$ for some $c\in\tilde{\Lambda}$, and thus, $\tilde{c}_{j_1}\tilde{c}_{j_2} \cdots \tilde{c}_{j_r}=c^r=0$. Now, by (\ref{3.16}) and (\ref{3.27}), we can rewrite $m$ as 
\begin{equation}\nonumber
	\begin{split}
m =& b_1\cdots b_{2nd} =b_1\cdots b_{s_0} (c_1 c_2 \cdots c_{2n}) b_{s_{2n}+1}\cdots b_{2nd} \\
	=& (b_1\cdots b_{s_0} ) (c_1 \cdots c_{i_{j_1}-1})\tilde{c}_{j_1}(c_{i_{j_1}+1} \cdots c_{i_{j_2}-1}) \tilde{c}_{j_2} \cdots \tilde{c}_{j_r} (c_{i_{j_r}+1} \cdots c_{2n})(b_{s_{2n}+1}\cdots b_{2nd})\\
	=& (b_1\cdots b_{s_0} ) m_1 \tilde{c}_{j_1} m_2 \tilde{c}_{j_2} m_3 \cdots m_{r}\tilde{c}_{j_r}m_{r+1} (b_{s_{2n}+1}\cdots b_{2nd}) \ ,
	\end{split}
\end{equation}
where $m_1= (c_1 \cdots c_{i_{j_1}-1})$, $m_2=(c_{i_{j_1}+1}\cdots c_{i_{j_2}-1})$, $\dots$, $m_r=(c_{i_{j_r-1}+1}\cdots c_{i_{j_r}-1})$ and $m_{r+1}=(c_{i_{j_r}+1} \cdots c_{2n})$ belong to $\in\fr{R}_e$. Put $\tilde{m}_1=(b_1\cdots b_{s_0} ) m_1 $, and $\tilde{m}_{r+1}=m_{r+1} (b_{s_{2n}+1}\cdots b_{2nd})$ which not necessarily belong to $\fr{R}_e$. By (\ref{3.13}) and (\ref{3.14}), and since $\fr{R}_e$ is $\f{f}$-commutative, it follows that
\begin{equation}\nonumber
	\begin{split}
m &= \tilde{m}_1 \tilde{c}_{j_1} m_2 \tilde{c}_{j_2} m_3 \cdots m_{r}\tilde{c}_{j_r} \tilde{m}_{r+1}= \tilde{m}_1 (\tilde{c}_{j_1} m_2 \tilde{c}_{j_2} m_3 \cdots m_{r}\tilde{c}_{j_r}) \tilde{m}_{r+1}\\
			&= \tilde{m}_1 \f{f}(\tilde{c}_{j_1}, m_2)m_2 \f{f}(\tilde{c}_{j_1} \tilde{c}_{j_2}, m_3)m_3\cdots \f{f}(\tilde{c}_{j_1} \tilde{c}_{j_2}\cdots \tilde{c}_{j_{r-1}}, m_r)m_r (\tilde{c}_{j_1} \tilde{c}_{j_2}\cdots \tilde{c}_{j_r}) \tilde{m}_{r+1} \ .
	\end{split}
\end{equation}
Since $\tilde{c}_{j_1} \tilde{c}_{j_2}\cdots \tilde{c}_{j_r}=0$, we conclude that $m=b_1\cdots b_{2nd}=0$. Evidently, this is a contradiction. Furthermore, we conclude that $\fr{R}$ is a nil ring.

To prove the second part of this theorem, it is sufficient to take $r=\f{nd}_{nil}(\fr{R}_e)$ and to proceed as in the first part of this proof.
\end{proof}

By the proof of the previous theorem, if $\fr{R}$ is an $\f{S}$-graded ring whose neutral component is nil of bounded index, we can exhibit an upper bound for $\f{nd}_{nil}(\fr{R})$. Indeed, it is easy to see that $\f{nd}_{nil}(\fr{R})\leq 2sd^2\left(\ds\frac{d^{2d}-1}{d-1}\right)$, where $\f{nd}_{nil}(\fr{R}_e)= s< \infty$ and $d=|\f{Supp}(\fr{R})|$, since $|\tilde{\Lambda}|\leq |\Lambda|\leq d+d^2+\cdots+d^{2d}=\ds\frac{d(d^{2d}-1)}{d-1}$.

Observe also that a proof similar to proof of Theorem \ref{3.15} ensures a positive answer to Problem \ref{problem01} in the class of all $\f{f}$-commutative rings. Beside that, Theorem \ref{3.15} provides that Problem \ref{problem01} has a positive solution in the class of all associative rings with a finite grading whose the neutral component belongs to the class of all $\f{f}$-commutative rings.

Notice that we can weaken the definition of an $\f{f}$-commutator and still obtain that the previous theorem is true. In fact, it is sufficient to assume that a semigroup $\fr{S}$ acts on $\fr{R}$ if $(\lambda\gamma)x=\lambda(\gamma x)$ for any $\lambda,\gamma\in \fr{S}$, and $x\in\fr{R}$, and hence, to define $[a,b]_{\f{f}}=ab-(\f{f}(a,b)b)a$ for any $a,b\in\fr{R}$, where $\f{f}$ is a map from $\fr{R}\times\fr{R}$ into $\fr{S}$. Therefore, (\ref{3.13}) and (\ref{3.14}) are still true, and thus, Theorem \ref{3.15} can also be verified in this case.

\begin{theorem}\label{3.18}
Let $\f{S}$ be a left cancellative monoid and $\fr{R}$ a ring with a finite $\f{S}$-grading $\Gamma$ of order $d$. If $\fr{R}_e$ is nilpotent of index $\f{nd}(\fr{R}_e)=r\geq1$, then $\fr{R}$ is a nilpotent ring, such that $r \leq \f{nd}(\fr{R})\leq dr$ for $r>1$, and $r\leq\f{nd}(\fr{R})\leq d+1$ for $r=1$.
\end{theorem}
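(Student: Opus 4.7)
\textit{Proof plan.} First I would dispose of the cheap pieces. The lower bound $r \leq \f{nd}(\fr{R})$ is immediate from $\fr{R}_e \subseteq \fr{R}$: the hypothesis $\f{nd}(\fr{R}_e)=r$ gives $\fr{R}_e^{\,r-1} \neq \{0\}$ when $r>1$, and since $\fr{R}_e^{\,r-1} \subseteq \fr{R}^{\,r-1}$, this forces $\f{nd}(\fr{R}) \geq r$; for $r=1$ the lower bound is vacuous. For the upper bound in the case $r=1$ we have $\fr{R}_e=\{0\}$, and $\f{nd}(\fr{R}) \leq d+1$ is exactly Proposition~\ref{3.03}.

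The main case is $r>1$, where, via the expansion (\ref{3.01}), the task reduces to showing that every product $a_1 a_2 \cdots a_{dr}$ of $dr$ homogeneous elements vanishes. Assume for contradiction that $a_1 a_2 \cdots a_{dr} \neq 0$. I would invoke Lemma~\ref{3.05} with the integer $r$: it yields indices $0 \leq s_0 < s_1 < \cdots < s_r \leq dr$ such that the $r$ consecutive blocks
\[
c_i \eq a_{s_{i-1}+1}\, a_{s_{i-1}+2}\, \cdots\, a_{s_i} \qquad (i=1,\dots,r)
\]
all lie in $\fr{R}_e$. Because these blocks appear consecutively inside the original product,
\[
a_1 a_2 \cdots a_{dr} \;=\; \bigl(a_1\cdots a_{s_0}\bigr)\,\bigl(c_1 c_2 \cdots c_r\bigr)\,\bigl(a_{s_r+1}\cdots a_{dr}\bigr),
\]
and the middle factor lies in $\fr{R}_e^{\,r}=\{0\}$. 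Hence $a_1 a_2\cdots a_{dr}=0$, a contradiction, which yields $\fr{R}^{dr}=\{0\}$ and thus $\f{nd}(\fr{R}) \leq dr$.

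The step one might expect to be hard, by analogy with Theorem~\ref{3.15}, actually collapses here: in the nil case of Theorem~\ref{3.15} a product of $r$ arbitrary elements of $\fr{R}_e$ need not vanish, so that proof must first pigeonhole the neutral-degree blocks into $r$ equal copies of a single element and then use $\f{f}$-commutativity to concentrate them; under full nilpotency of $\fr{R}_e$, the identity $\fr{R}_e^{\,r}=\{0\}$ lets the $r$ consecutive blocks supplied by Lemma~\ref{3.05} be multiplied directly, needing neither pigeonholing nor any commutation hypothesis. The only delicate point is to apply Lemma~\ref{3.05} with the precise integer $r=\f{nd}(\fr{R}_e)$ (rather than anything larger), which is exactly what yields the sharp exponent $dr$ in the conclusion.
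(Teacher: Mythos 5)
Your proposal is correct and follows essentially the same route as the paper: Proposition \ref{3.03} handles $r=1$, and for $r>1$ Lemma \ref{3.05} produces the $r$ consecutive neutral-degree blocks whose product lies in $(\fr{R}_e)^r=\{0\}$, killing any product of $dr$ homogeneous elements. Your added remarks on the trivial lower bound and on why no pigeonholing or $\f{f}$-commutativity is needed are accurate but do not change the argument.
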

\begin{proof}
Firstly, when $\f{nd}(\fr{R}_e)=r=1$, the result follows from Proposition \ref{3.03}.

Now, suppose that $\fr{R}_e$ is a nilpotent ring with $\f{nd}(\fr{R}_e)=r>1$. We will show that $a_1 a_2 \cdots a_{rd}=0$ for any homogeneous elements $a_1, a_2, \dots, a_{rd}\in\fr{R}$ (see (\ref{3.01})), where $d=|\f{Supp}(\Gamma)|$. 

By Lemma \ref{3.05}, suppose that there exist $0\leq s_0< s_1 < \cdots < s_r \leq rd$ satisfying
	\begin{equation}\nonumber
e= \f{deg}(a_{s_0+1} \cdots a_{s_1}) = \f{deg}(a_{s_1+1} \cdots a_{s_2}) = \cdots = \f{deg}(a_{s_{r-1}+1} \cdots a_{s_r})
.	\end{equation}
 Hence, $(a_{s_0+1} \cdots a_{s_1}), (a_{s_1+1} \cdots a_{s_2}), \dots, (a_{s_{r-1}+1} \cdots a_{s_r}) \in \fr{R}_e$, and thus, it follows that $a_{s_0+1} \cdots a_{s_r} \in (\fr{R}_e)^r=\{0\}$. By this reason, we have that $a_1\dots a_{rd}=0$. 
  Therefore, we conclude that $\fr{R}$ is a nilpotent ring with $\f{nd}(\fr{R})\leq dr$ for $r>1$.
\end{proof}

From the Theorem \ref{3.18}, it easily follows that Problem \ref{problem01} has a positive solution in the class of graded rings with finite support and whose the neutral component is a nilpotent ring.

%

Let us finish this section by presenting some generalizations of the previous results for the non-finite support case. First, let us recall the definition of \textit{quotient monoid}. Given a monoid $\f{S}$, consider a \textit{congruence relation} $\sim$ on $\f{S}$, that is, $\sim$ is an equivalence relation such that $g\sim h$ and $k\sim t$ imply $gk\sim ht$, for any $g,h,k,t \in \f{S}$. For each $g\in\f{S}$, consider the set $[g]_\sim=\{h\in\f{S} : g\sim h\}$. We have that the set $\{[g]_\sim : g\in\f{S}\}$ together with operation ``$\circ$'' defined by $[g]_\sim\circ[h]_\sim=[gh]_\sim$ forms a monoid, called the \textit{quotient monoid} (or \textit{factor monoid}), and denoted $\f{S}/\!\!\sim$. Note that $\f{S}/\!\!\sim$ is left cancellative when $\f{S}$ is left cancellative. Now, let us define a grading by the quotient monoid $\f{S}/\!\!\sim$ (induced by a $\f{S}$-grading). Let $\f{S}$ be a monoid, and $\fr{R}$ a ring with an $\f{S}$-grading $\Gamma$. Considering the quotient monoid $\f{S}/\!\!\sim$, for some a congruence relation $\sim$ on $\f{S}$, the \textit{$\f{S}/\!\!\sim$-grading on $\fr{R}$ induced by $\Gamma$} is defined by $\widetilde{\Gamma}: \fr{R}=\bigoplus_{\bar{g}\in\f{S}/\!\sim} \fr{R}_{\bar{g}}$, where $\fr{R}_{\bar{g}}=\bigoplus_{\substack{{h\in\f{S}}\\{h\sim g}}}\fr{R}_{h}$ for any $\bar{g}\in\f{S}/\!\!\sim$. Note that $\f{Supp}(\widetilde{\Gamma})=\{\bar{g}\in\f{S}/\sim \ : g\in\f{Supp}(\Gamma)\}$, and hence, if $\Gamma$ is finite, thus $\widetilde{\Gamma}$ is finite.

\begin{corollary}\label{3.04}
Let $\f{S}$ be a left cancellative monoid, and $\sim$ a congruence relation on $\f{S}$. Let $\fr{R}$ be a ring with an $\f{S}$-grading $\Gamma$, not necessarily finite. Consider the quotient monoid $\f{S}/\!\!\sim$, and let $\widetilde{\Gamma}: \fr{R}=\bigoplus_{\bar{g}\in\f{S}/\!\sim} \fr{R}_{\bar{g}}$ be the $\f{S}/\!\!\sim$-grading induced by $\Gamma$. Suppose $\widetilde{\Gamma}$ is finite of order $d$. The following assumptions are true:	
	\begin{itemize}
		\item[i)] If $\{h\in\f{S}: h\sim e \}\cap\f{Supp}(\Gamma)=\emptyset$, then $\fr{R}^{d+1}=\{0\}$;
		\item[ii)] If $\fr{R}_{\bar{e}}=\bigoplus_{\substack{{h\in\f{S}}\\{h\sim e}}}\fr{R}_{h}$ is $\f{f}$-commutative and nil (of bounded index), then $\fr{R}$ is nil (of bounded index);
		\item[iii)] $\fr{R}_{\bar{e}}$ is nilpotent iff $\fr{R}$ is nilpotent.
	\end{itemize}
\end{corollary}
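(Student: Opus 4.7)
The plan is to reduce all three parts to the corresponding results already proved for finite gradings by a left cancellative monoid, applied to the induced quotient grading $\widetilde{\Gamma}$ rather than to $\Gamma$ itself. The point is that $\widetilde{\Gamma}$ is a finite $\f{S}/\!\sim$-grading on $\fr{R}$, and since $\f{S}$ is left cancellative, so is $\f{S}/\!\sim$ (as noted in the paragraph preceding the statement); thus Propositions \ref{3.03}, \ref{3.31}, Theorem \ref{3.15} and Theorem \ref{3.18} all apply to $\widetilde{\Gamma}$ verbatim. The neutral element of $\f{S}/\!\sim$ is $\bar{e}=[e]_\sim$, so the neutral component of $\widetilde{\Gamma}$ is exactly $\fr{R}_{\bar{e}}=\bigoplus_{h\sim e}\fr{R}_h$.

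For item (i), I would observe that the hypothesis $\{h\in\f{S}:h\sim e\}\cap\f{Supp}(\Gamma)=\emptyset$ is equivalent to $\fr{R}_h=\{0\}$ for every $h\sim e$, hence $\fr{R}_{\bar{e}}=\{0\}$, i.e.\ $\bar{e}\notin\f{Supp}(\widetilde{\Gamma})$. Since $|\f{Supp}(\widetilde{\Gamma})|=d$, Proposition \ref{3.03} applied to $\widetilde{\Gamma}$ immediately gives $\fr{R}^{d+1}=\{0\}$.

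For item (ii), since $\fr{R}_{\bar{e}}$ is by assumption nil (of bounded index) and $\f{f}$-commutative, and since $\widetilde{\Gamma}$ is a finite grading on $\fr{R}$ by the left cancellative monoid $\f{S}/\!\sim$, Theorem \ref{3.15} applied to $\widetilde{\Gamma}$ directly yields that $\fr{R}$ is nil (of bounded index). For item (iii), one direction is trivial: if $\fr{R}$ is nilpotent then its additive subgroup $\fr{R}_{\bar{e}}$, being a subring, is also nilpotent. Conversely, if $\fr{R}_{\bar{e}}$ is nilpotent, then Theorem \ref{3.18} applied to the finite $\f{S}/\!\sim$-grading $\widetilde{\Gamma}$ gives that $\fr{R}$ is nilpotent, with the same type of bound $\f{nd}(\fr{R})\leq d\cdot\f{nd}(\fr{R}_{\bar{e}})$.

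The only real point requiring care, and the main (minor) obstacle, is verifying that $\widetilde{\Gamma}$ is genuinely an $\f{S}/\!\sim$-grading, i.e.\ that $\fr{R}_{\bar{g}}\fr{R}_{\bar{h}}\subseteq\fr{R}_{\overline{gh}}$; this follows from the definition of a congruence relation because $\fr{R}_g\fr{R}_h\subseteq\fr{R}_{gh}$ and $gh\sim g'h'$ whenever $g\sim g'$, $h\sim h'$, so summing over the $\sim$-classes gives the required inclusion. Once this (and the left cancellativity of $\f{S}/\!\sim$) is in place, all three items become formal invocations of previous results.
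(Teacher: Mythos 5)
Your proposal is correct and follows essentially the same route as the paper: pass to the induced finite $\f{S}/\!\!\sim$-grading $\widetilde{\Gamma}$, note that $\f{S}/\!\!\sim$ is left cancellative, and apply Proposition \ref{3.03}, Theorem \ref{3.15} and Theorem \ref{3.18} to it. The extra checks you spell out (that $\widetilde{\Gamma}$ is indeed a grading and that its neutral component is $\fr{R}_{\bar{e}}$) are exactly what the paper leaves implicit.
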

\begin{proof}
Considering $\fr{R}$ with its $\f{S}/\!\!\sim$-grading $\widetilde{\Gamma}$ (induced by $\Gamma$), it is sufficient to apply Proposition \ref{3.03}, Theorem \ref{3.15} and Theorem \ref{3.18}. The result follows.
\end{proof}


It is important to note that, in general, the previous corollary ensures that for a graded ring with a support, not necessarily finite, we can obtain similar results to the first part of this work. In addition, if support of $\Gamma$ is finite, then Corollary \ref{3.04} is a union of Proposition \ref{3.03}, Theorem \ref{3.15} and Theorem \ref{3.18}.
 
%

\begin{corollary}\label{3.32}
Let $\f{G}$ be a group and $\fr{R}$ a ring with a $\f{G}$-grading $\Gamma$, not necessarily finite. Let $H$ be a normal subgroup of $\f{G}$ and $\overline{\Gamma}: \fr{R}=\bigoplus_{\bar{g}\in\f{G}/H} \fr{R}_{\bar{g}}$ the $\f{G}/H$-grading induced by $\Gamma$. Suppose $\overline{\Gamma}$ is finite of order $d$. The following assumptions are true:	
	\begin{itemize}
		\item[i)] If $H\cap\f{Supp}(\Gamma)=\emptyset$, then $\fr{R}^{d+1}=\{0\}$;
		\item[ii)] If $\fr{R}_{\bar{e}}=\bigoplus_{h\in H}\fr{R}_{h}$ is $\f{f}$-commutative and nil (resp. nil of bounded index), then $\fr{R}$ is nil (resp. nil of bounded index).
		\item[iii)] $\fr{R}_{\bar{e}}=\bigoplus_{h\in H}\fr{R}_{h}$ is nilpotent iff $\fr{R}$ is nilpotent.
	\end{itemize}
\end{corollary}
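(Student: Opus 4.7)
The plan is to deduce this corollary directly from Corollary \ref{3.04} by exhibiting the quotient group $\f{G}/H$ as a quotient monoid $\f{G}/\!\!\sim$ for a suitable congruence relation $\sim$ on the (two-sided, hence left) cancellative monoid $\f{G}$.

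First I would define the equivalence relation $\sim$ on $\f{G}$ by $g\sim h$ if and only if $g^{-1}h\in H$ (equivalently $gH=hH$). The normality of $H$ in $\f{G}$ is precisely what forces $\sim$ to be a congruence: if $g\sim h$ and $k\sim t$, i.e.\ $g^{-1}h\in H$ and $k^{-1}t\in H$, then $(gk)^{-1}(ht)=k^{-1}(g^{-1}h)t=\bigl(k^{-1}(g^{-1}h)k\bigr)(k^{-1}t)\in H$ since $k^{-1}Hk\subseteq H$. Since $\f{G}$ is cancellative, the hypothesis of Corollary \ref{3.04} is satisfied.

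Next I would identify all objects in Corollary \ref{3.04} with those of the present statement. The quotient monoid $\f{G}/\!\!\sim$ is literally the quotient group $\f{G}/H$; the induced $\f{G}/\!\!\sim$-grading $\widetilde{\Gamma}$ is exactly $\overline{\Gamma}$, since for each $\bar g=gH$ we have $\fr{R}_{\bar g}=\bigoplus_{h\sim g}\fr{R}_h=\bigoplus_{h\in gH}\fr{R}_h$. In particular $\{h\in\f{G}:h\sim e\}=H$, so the hypothesis $H\cap\f{Supp}(\Gamma)=\emptyset$ is exactly the hypothesis of item (i) of Corollary \ref{3.04}, and $\fr{R}_{\bar{e}}=\bigoplus_{h\sim e}\fr{R}_h=\bigoplus_{h\in H}\fr{R}_h$ matches the neutral component featured in items (ii) and (iii). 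The finiteness and order of $\overline{\Gamma}$ also coincide with those of $\widetilde{\Gamma}$.

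With these identifications in place, items (i), (ii), (iii) follow immediately from the corresponding items of Corollary \ref{3.04}. No serious obstacle is expected: the only non-trivial point is that normality of $H$ makes $\sim$ a congruence, which is a standard group-theoretic check, while every other step is purely notational rewriting of $gH$ as $\{h\in\f{G}:h\sim g\}$.
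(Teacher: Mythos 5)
Your proposal is correct and follows the same route as the paper: the paper's proof simply invokes Corollary \ref{3.04}, noting that every group is a (left and right) cancellative monoid, with the congruence $g\sim h \Leftrightarrow gH=hH$ left implicit. Your explicit verification that normality of $H$ makes $\sim$ a congruence and the identification of $\widetilde{\Gamma}$ with $\overline{\Gamma}$ are just the details the paper omits.
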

\begin{proof}
The result follows from Corollary \ref{3.04}, since all group is a left (and right) cancellative monoid.
\end{proof}


\section{Main applications}

In this section, we present three applications of the results of the previous section: one of them generalizes the Dubnov-Ivanov-Nagata-Higman Theorem, and the other two show relations between graded rings and Kurosch-Levitzki Problem and K\"{o}the's Problem. As in the previous sections, here, all the rings (and algebras) are associative, not necessarily with unity.

\subsection{Graded Algebras and Dubnov-Ivanov-Nagata-Higman Theorem}

An important consequence of our results arises to generalize {\it Dubnov-Ivanov-Nagata-Higman Theorem}. Here, let us present a generalization of Dubnov-Ivanov-Nagata-Higman Theorem for graded algebras. Unless otherwise stated, we denote by $\bb{F}$ a field, $\f{S}$ a left cancellative monoid, and $\fr{A}$ an associative $\bb{F}$-algebra with a finite $\f{S}$-grading. Below, we present Dubnov-Ivanov-Nagata-Higman Theorem and a theorem due to Razmyslov.

\begin{theorem}[Dubnov-Ivanov-Nagata-Higman, \cite{DubnIvan43,Naga52,Higm56}]\label{teonagatahigman}
Let $\fr{A}$ be an associative algebra over a field $\bb{F}$. Assume $\f{char}(\bb{F})=p$. Suppose $a^n=0$ for any $a\in\fr{A}$. If $p=0$ or $n<p$, then $a_1 a_2 \cdots a_{2^n-1} = 0$ for any $a_1, a_2, \dots, a_{2^n-1}\in\fr{A}$.
\end{theorem}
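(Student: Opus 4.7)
The plan is to prove this classical result by induction on $n$, using the standard multilinearization trick that is licensed precisely by the characteristic hypothesis.

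\textbf{Base case.} When $n=1$, the hypothesis $a=0$ for all $a\in\fr{A}$ forces $\fr{A}=\{0\}$, so trivially $\fr{A}^{2^1-1}=\fr{A}^1=\{0\}$.

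\textbf{Inductive step.} Suppose the statement has been proven for $n-1$, so that any $\bb{F}$-algebra with $b^{n-1}=0$ everywhere is nilpotent of index at most $2^{n-1}-1$. I would first linearize the identity $x^n=0$: substituting $x=\lambda_1 x_1+\cdots+\lambda_n x_n$ and extracting the multilinear coefficient of $\lambda_1\cdots\lambda_n$ produces
\begin{equation*}
\sum_{\sigma\in S_n} x_{\sigma(1)} x_{\sigma(2)}\cdots x_{\sigma(n)}=0,\qquad x_1,\ldots,x_n\in\fr{A}.
\end{equation*}
This step uses crucially that $p=0$ or $p>n$, because dividing out the combinatorial factor $n!$ (and its sub-factorials) requires all integers up to $n$ to be invertible in $\bb{F}$. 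Specializing $x_1=\cdots=x_{n-1}=a$ and $x_n=y$ and dividing by $(n-1)!$ yields the partial linearization
\begin{equation*}
\sum_{k=0}^{n-1} a^{k}\, y\, a^{n-1-k}=0,\qquad a,y\in\fr{A}. \tag{$\ast$}
\end{equation*}

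\textbf{Induction mechanism via an ideal.} Let $J$ be the two-sided ideal of $\fr{A}$ generated by $\{a^{n-1}:a\in\fr{A}\}$. In the quotient $\fr{A}/J$, every element $\bar y$ satisfies $\bar y^{\,n-1}=0$, so the inductive hypothesis gives $(\fr{A}/J)^{2^{n-1}-1}=\{0\}$, i.e.
\begin{equation*}
\fr{A}^{2^{n-1}-1}\subseteq J.
\end{equation*}
The remaining task, which is the heart of the proof, is to establish a companion inclusion of the form $J\cdot \fr{A}^{2^{n-1}-1}=\{0\}$ (or the symmetric statement on the other side). Granting this, one concludes
\begin{equation*}
\fr{A}^{2^{n}-1}=\fr{A}^{2^{n-1}-1}\cdot\fr{A}\cdot \fr{A}^{2^{n-1}-1}\subseteq J\cdot \fr{A}^{2^{n-1}-1}=\{0\},
\end{equation*}
which is the desired conclusion, since $2(2^{n-1}-1)+1=2^{n}-1$.

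\textbf{Main obstacle.} The hard part is exactly showing $J\cdot\fr{A}^{2^{n-1}-1}=\{0\}$. Identity $(\ast)$ lets one rewrite any occurrence of $a^{n-1}$ at the right end of a word as a sum of words in which $a$-factors have been redistributed on both sides of the intermediate letter; iterated, this propagates $(n-1)$-powers leftwards through products and, in combination with the inductive hypothesis applied to carefully chosen subwords, must be used to collapse products of length $2^{n-1}-1$ containing a generator of $J$. This combinatorial bookkeeping, together with a secondary induction to keep track of how the power of $a$ distributes as one pushes it through successive factors, is the delicate technical core; the rest of the argument (linearization, passage to the quotient, and the final length count $2(2^{n-1}-1)+1=2^n-1$) is routine.
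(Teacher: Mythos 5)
First, a point of comparison: the paper does not prove this statement at all -- it is quoted as the classical Dubnov--Ivanov--Nagata--Higman Theorem with references to \cite{DubnIvan43,Naga52,Higm56} -- so your argument has to stand on its own, and as written it does not. The step you yourself call ``the heart of the proof'' is not only left unexecuted, it is formulated incorrectly. You take $J$ to be the two-sided ideal generated by $\{a^{n-1}:a\in\fr{A}\}$ and ask for the one-sided inclusion $J\cdot\fr{A}^{2^{n-1}-1}=\{0\}$ (or its mirror image). Already for $n=2$ this is false: there $J=\fr{A}$ and $\fr{A}^{2^{n-1}-1}=\fr{A}$, so your companion inclusion would force $\fr{A}^{2}=\{0\}$, whereas a nil algebra of index $2$ in characteristic $\neq 2$ satisfies in general only $\fr{A}^{3}=\{0\}$ (the relatively free algebra of the identity $x^{2}$ on two generators has basis $x,y,xy$ with $xy\neq0$; compare the computation in Proposition \ref{3.17} of the paper). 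So no amount of ``combinatorial bookkeeping'' with $(\ast)$ can close the argument in the form you set it up.

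What the classical induction actually requires is the two-sided sandwich lemma: $a^{n-1}\,x\,b^{n-1}=0$ for all $a,x,b\in\fr{A}$, equivalently $J\,\fr{A}\,J=\{0\}$. This is the genuinely nontrivial computation with the partial linearization $(\ast)$ (a double-sum cancellation, again using invertibility of the relevant integers), and it is precisely the piece your sketch defers. Once it is available, your quotient step $\fr{A}^{2^{n-1}-1}\subseteq J$ gives $\fr{A}^{2^{n}-1}\subseteq J\,\fr{A}\,J=\{0\}$ with the same length count $2(2^{n-1}-1)+1=2^{n}-1$. The surrounding scaffolding in your proposal (linearization under $p=0$ or $p>n$, passage to $\fr{A}/J$, the exponent arithmetic) is fine, but the key lemma is misidentified and unproved, so the proposal has a genuine gap.
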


The following result ensures a lower nilpotency index for a nil algebra over a field of characteristic zero than the previous theorem.

\begin{theorem}[Theorem 33.1, \cite{Razm94}]\label{teorazm}
In any associative algebra $\fr{A}$ over a field of characteristic zero in which $b^n= 0$ is valid for any $b\in\fr{A}$, the equality $a_1 a_2\dots a_{n^2}=0$ is valid for any $a_1, a_2, \dots, a_{n^2}\in\fr{A}$.
\end{theorem}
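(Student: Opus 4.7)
The plan is to derive the quantitative bound via multilinearization of the nil identity followed by Razmyslov's trace/Capelli analysis. Since $\operatorname{char}(\mathbb{F}) = 0$, substitute $b = \lambda_1 x_1 + \cdots + \lambda_n x_n$ into $b^n = 0$ and extract the coefficient of $\lambda_1 \cdots \lambda_n$; this yields the multilinear symmetric identity
\begin{equation*}
\phi_n(x_1, \dots, x_n) \;:=\; \sum_{\sigma \in S_n} x_{\sigma(1)} x_{\sigma(2)} \cdots x_{\sigma(n)} \;=\; 0,
\end{equation*}
which (because $n!$ is invertible) is equivalent to the original nil condition. The first step is therefore purely formal, but it converts the problem into one about a single multilinear $T$-ideal relation.

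The second step is to pass to a trace-algebra setting. I would adjoin a formal trace and exploit the fact that, over a field of characteristic zero, the power sums $p_k(x) = \operatorname{tr}(x^k)$ generate the ring of symmetric functions, so each $p_k$ for $k \geq 1$ is a polynomial in $p_1, \dots, p_n$ by the Newton identities. Combined with the Cayley--Hamilton identity in the trace-closure of $\fr{A}$, this gives a rich supply of multilinear consequences of $\phi_n = 0$. Razmyslov's insight is that the Capelli polynomial
\begin{equation*}
\operatorname{Cap}_{n+1}(x_1, \dots, x_{n+1}; y_1, \dots, y_n) \;=\; \sum_{\sigma \in S_{n+1}} \operatorname{sgn}(\sigma)\, x_{\sigma(1)} y_1 x_{\sigma(2)} y_2 \cdots y_n x_{\sigma(n+1)}
\end{equation*}
vanishes identically on any algebra satisfying $\phi_n = 0$, because its alternation in $n+1$ slots interacts with the symmetric identity to produce a formal cancellation.

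The third step is the combinatorial/representation-theoretic core. I would decompose the space of multilinear words of length $n^2$ into isotypic components under the $S_{n^2}$-action, indexed by partitions $\lambda \vdash n^2$. Any $\lambda$ with a column of length $\geq n+1$ forces vanishing by the Capelli consequence above, while any $\lambda$ with all columns of length $\leq n$ necessarily has a row of length $\geq n$, which allows me to invoke the symmetric identity $\phi_n = 0$ on that row after applying suitable Young symmetrizers. The pigeonhole on Young diagrams of size $n^2$ is exactly what produces the bound $n^2$ rather than the exponential bound $2^n - 1$ from Dubnov--Ivanov--Nagata--Higman.

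The main obstacle is this last step: matching Young diagrams to consequences of the nil identity. The naive multilinearization gives only the vanishing of the fully symmetrized word, and it is nontrivial to propagate this to every multilinear word of length $n^2$ via the Capelli identities. This is precisely the content of Razmyslov's original argument, and in practice one cannot shorten it substantially below the level of detail in \cite{Razm94}; for the purposes of the present paper I would simply cite it and use the bound $n^2$ as a black box in subsequent applications.
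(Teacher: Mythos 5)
This statement is a classical theorem of Razmyslov which the paper does not prove at all: it is quoted with the citation \cite{Razm94} (Theorem 33.1) and then used purely as a black box in Theorem \ref{3.24}. Your final decision to cite it and take the bound $n^2$ as given is therefore exactly the paper's own treatment, and to that extent there is no mismatch.

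Judged as an argument, however, your intermediate steps have genuine gaps. The multilinearization producing $\phi_n=0$ is fine in characteristic zero, but the assertion that the Capelli polynomial $\operatorname{Cap}_{n+1}$ vanishes by a ``formal cancellation'' against the symmetric identity is unsubstantiated; it is not an elementary consequence of $\phi_n=0$, and it is not how Razmyslov proceeds. His route goes through the theory of trace identities: one adjoins a formal trace which is identically zero on the nil algebra, so that the Cayley--Hamilton trace identity of degree $n$ degenerates precisely to $x^n=0$, and the quantitative conclusion is then extracted from the structure of the corresponding ideal in the group algebra of the symmetric group, the $n\times n$ square partition of $n^2$ being the extremal case -- your Newton-identities remark gestures at this but does not supply the mechanism. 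Similarly, the pigeonhole on Young diagrams is only heuristic as stated: a partition $\lambda\vdash n^2$ with at most $n$ rows indeed has a row of length at least $n$, but applying a Young symmetrizer symmetrizes \emph{positions} in a multilinear word, and this does not by itself produce a substitution instance of $\phi_n$ inside an associative product; bridging that gap is the actual content of Razmyslov's proof, as you yourself acknowledge. So the sketch would not compile into a proof, and the honest course -- which both you and the paper take -- is simply to cite \cite{Razm94} (see also \cite{Razm74}) for the bound $n^2$.
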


Finally, let us deduce an immediate consequence from Theorems \ref{3.18}, \ref{teonagatahigman} and \ref{teorazm}. Therefore, we have answered Problem \ref{problem01} for $\f{S}$-graded algebras over a field of characteristic zero, if $\fr{A}_e$ is nil of bounded index.

\begin{theorem}\label{3.24}
Let $\f{S}$ be a left cancellative monoid, $\bb{F}$ a field of characteristic $\f{char}(\bb{F})=p$, and $\fr{A}$ an associative $\bb{F}$-algebra with an $\f{S}$-grading $\Gamma$ of finite support. Suppose $\fr{A}_e$ is a nil algebra of bounded index $s=\f{nd}_{nil}(\fr{A}_e)>1$. If $p=0$ or $p>s$, then $\fr{A}$ is a nilpotent algebra. In addition, for $d=|\f{Supp}(\Gamma)|$, we have
	\begin{itemize}
		\item[i)] if $p>s$, then $\f{nd}(\fr{A})\leq d(2^s -1)$;
		\item[ii)] if $p=0$ , then $\f{nd}(\fr{A})\leq dq$, where $q= \left\{\begin{array}{cl}
	2^s-1, &\mbox{if}\quad s=2,3,4 \\
	s^2, &\mbox{if}\quad s\geq5
	\end{array} \right.
\ .$
	\end{itemize}
If $\f{nd}(\fr{A}_e)=1$, then $\fr{R}$ is nilpotent for any field $\bb{F}$, and $\f{nd}(\fr{A})\leq d+1$.
\end{theorem}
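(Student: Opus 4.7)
The plan is to reduce the theorem to a direct combination of the classical Dubnov--Ivanov--Nagata--Higman bound, Razmyslov's sharper bound for characteristic zero, and our own Theorem \ref{3.18}. The degenerate case $\f{nd}(\fr{A}_e)=1$, which means $\fr{A}_e=\{0\}$, is immediate from Proposition \ref{3.03}, giving $\f{nd}(\fr{A})\leq d+1$, so I will dispose of it first and then assume $s=\f{nd}_{nil}(\fr{A}_e)>1$ throughout.

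Next, I would observe that under the hypothesis $p=0$ or $p>s$, Theorem \ref{teonagatahigman} applies directly to the subalgebra $\fr{A}_e$ (not to $\fr{A}$, which is only \emph{$\f{S}$-nil} by Proposition \ref{3.31}, not necessarily nil): since $b^s=0$ for every $b\in\fr{A}_e$, we deduce $\fr{A}_e$ is nilpotent with $\f{nd}(\fr{A}_e)\leq 2^s-1$. Now the neutral component $\fr{A}_e$ satisfies the hypothesis of Theorem \ref{3.18}, so $\fr{A}$ itself is nilpotent with
\[
\f{nd}(\fr{A})\leq d\cdot \f{nd}(\fr{A}_e)\leq d(2^s-1),
\]
which is exactly item (i). This handles the positive characteristic case $p>s$.

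For item (ii), I would additionally invoke Theorem \ref{teorazm}: when $p=0$, the same nil hypothesis on $\fr{A}_e$ gives $\f{nd}(\fr{A}_e)\leq s^2$. Combining with the bound $2^s-1$ from Theorem \ref{teonagatahigman}, we can choose whichever is smaller and then apply Theorem \ref{3.18}:
\[
\f{nd}(\fr{A})\leq d\cdot \min\{2^s-1,\,s^2\}.
\]
A quick arithmetic check shows that $2^s-1\leq s^2$ precisely for $s\in\{2,3,4\}$ (namely $3\leq 4$, $7\leq 9$, $15\leq 16$), while $2^s-1>s^2$ for every $s\geq 5$ (starting with $31>25$ and then by an easy induction), which yields the piecewise definition of $q$ in the statement.

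There is essentially no technical obstacle here; the substantive content is already contained in Theorem \ref{3.18}. The only thing to be careful about is not to try to apply the classical DINH or Razmyslov bound to $\fr{A}$ itself, since $\fr{A}$ is not assumed nil a priori—the whole point of Theorem \ref{3.18} is to transfer nilpotency of the neutral component to nilpotency of the whole graded algebra, which is precisely what converts the classical statements about the nil algebra $\fr{A}_e$ into a statement about the $\f{S}$-graded algebra $\fr{A}$.
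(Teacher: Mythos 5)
Your proposal is correct and follows exactly the paper's own route: apply Theorem \ref{teonagatahigman} (and, for $p=0$, Theorem \ref{teorazm}) to the nil algebra $\fr{A}_e$ to conclude it is nilpotent with index at most $2^s-1$ (resp.\ $\min\{2^s-1,s^2\}$), then transfer nilpotency to $\fr{A}$ via Theorem \ref{3.18}, handling the case $\f{nd}(\fr{A}_e)=1$ by Proposition \ref{3.03}; the arithmetic comparison $2^s-1\leq s^2$ iff $s\in\{2,3,4\}$ matches the paper's justification of the piecewise bound $q$.
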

\begin{proof}
The first part follows directly from Theorem \ref{teonagatahigman} and from Theorem \ref{3.18}. Already the items {\it i)} and {\it ii)} follow from Theorem \ref{teorazm} and again from Theorem \ref{teonagatahigman}, since $2^n-1\leq n^2$ in $\bb{N}$ iff $n=1,2,3,4$.

The case $s=1$ follows from Theorem \ref{3.18} (or Proposition \ref{3.03}).
\end{proof}

Observe that, in Theorem \ref{3.24}, $\fr{A}_e$ is not necessarily $\f{f}$-commutative, and the bound of the nilpotency degree of $\fr{A}$ depends only on the nil index of $\fr{A}_e$ and the support order of the $\f{S}$-grading on $\fr{A}$.

The following corollary is an immediate consequence of Theorem \ref{3.24}.

\begin{corollary}\label{3.28}
	Let $\f{S}$ be a left cancellative monoid and $\fr{A}$ an associative algebra over a field $\bb{F}$ with a finite $\f{S}$-grading of order $d$. If $\fr{A}_e$ is nil, $\f{char}(\bb{F})\neq2,3$ and $s\in\{2,3,4\}$, then $a_1 a_2 \cdots a_{d(2^s-1)}=0$ for any $a_1, a_2, \dots, a_{d(2^s-1)}\in\fr{A}$.
\end{corollary}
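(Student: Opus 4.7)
The plan is to reduce Corollary \ref{3.28} to a direct invocation of Theorem \ref{3.24}(i), interpreting the hypothesis ``$s\in\{2,3,4\}$'' as the nil index of $\fr{A}_e$, i.e.\ $s=\f{nd}_{nil}(\fr{A}_e)$. The conclusion to be established, $a_1 a_2 \cdots a_{d(2^s-1)}=0$, is precisely the assertion $\fr{A}^{d(2^s-1)}=\{0\}$, which by definition of nilpotency index is equivalent to $\f{nd}(\fr{A})\leq d(2^s-1)$; this is exactly the bound supplied by item (i) of Theorem \ref{3.24}.

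The only small thing to check is that the characteristic hypothesis of Corollary \ref{3.28} matches the characteristic hypothesis of Theorem \ref{3.24}(i), which requires $p=0$ or $p>s$. First I would recall that $\f{char}(\bb{F})$ is either $0$ or a prime. Since the only primes less than or equal to $4$ are $2$ and $3$, the assumption $\f{char}(\bb{F})\neq 2,3$ forces $\f{char}(\bb{F})=0$ or $\f{char}(\bb{F})\geq 5$. Combined with $s\leq 4$, this gives $\f{char}(\bb{F})=0$ or $\f{char}(\bb{F})>s$, so the hypothesis of Theorem \ref{3.24}(i) is satisfied.

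Once this is in place, Theorem \ref{3.24}(i) applies and yields $\f{nd}(\fr{A})\leq d(2^s-1)$; unpacking the definition of $\f{nd}(\fr{A})$ gives the desired statement that any product of $d(2^s-1)$ elements of $\fr{A}$ vanishes. The main (and only) obstacle is the trivial verification that the arithmetic conditions line up, so the argument is essentially a one-line corollary; no new combinatorial work on the grading is needed beyond what was done in Theorem \ref{3.24}, which in turn rested on Lemma \ref{3.05} and Theorem \ref{teonagatahigman}.
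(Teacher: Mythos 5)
Your proposal is correct and takes the same route as the paper, whose proof is nothing more than a direct application of Theorem \ref{3.24} after the observation that $\f{char}(\bb{F})\neq2,3$ together with $s\leq4$ forces $\f{char}(\bb{F})=0$ or $\f{char}(\bb{F})>s$. One small correction: when $\f{char}(\bb{F})=0$ the bound $d(2^s-1)$ comes from item \textit{ii)} of Theorem \ref{3.24} (where $q=2^s-1$ precisely because $s\in\{2,3,4\}$), not from item \textit{i)}, which only covers $p>s$; since the resulting bound is the same, your argument goes through unchanged.
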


\subsection{Graded Algebras and Kurosch-Levitzki Problem}

Another important application of our results is associated with \textit{Kurosh-Levitzki Problem}. Let $\fr{A}$ be an associative algebra over a field arbitrary $\bb{F}$. The Kurosh-Levitzki Problem is formulated by ``\textit{if $\fr{A}$ is nil and finitely generated (as algebra), then is $\fr{A}$ nilpotent?}''. In \cite{Kapl48}, Kaplansky proved the following result:

\begin{theorem}[Theorem 5, \cite{Kapl48}]\label{3.30}
Every finitely generated nil algebra satisfying a polynomial identity\footnote{For a definition and properties of the \textit{polynomial identity}, see Chapter 1 of \cite{GiamZaic05}.} is nilpotent.
\end{theorem}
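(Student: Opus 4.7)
The strategy is to combine two classical tools---Shirshov's combinatorial structure theorem for finitely generated PI-algebras and the pointwise nil hypothesis---to reduce $\fr{A}$ to a finite-dimensional nil algebra, which is automatically nilpotent.

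Let $x_1,\dots,x_r$ generate $\fr{A}$ as an algebra, and let $n$ be the degree of a multilinear polynomial identity satisfied by $\fr{A}$ (multilinearity is obtained by standard linearization). The deep input is Shirshov's height theorem: there is a finite set $W=\{w_1,\dots,w_t\}$ of monomials in $x_1,\dots,x_r$ and an integer $h$, both depending only on $n$ and $r$, such that every element of $\fr{A}$ is a linear combination of monomials of the form $w_{i_1}^{k_1}w_{i_2}^{k_2}\cdots w_{i_s}^{k_s}$ with $s\leq h$ and with indices $(i_1,\dots,i_s)$ drawn in a fixed order from $\{1,\dots,t\}$.

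Now apply the nil hypothesis: each $w_j\in\fr{A}$ is nilpotent, so there exists $M\in\bb{N}$ with $w_j^{M}=0$ for every $j=1,\dots,t$. Consequently only products with every exponent $k_\ell<M$ can be nonzero, which yields a finite spanning set for $\fr{A}$ over $\bb{F}$. Hence $\dim_{\bb{F}}\fr{A}<\infty$, and a finite-dimensional nil algebra is nilpotent (its Jacobson radical equals the whole algebra and is nilpotent in finite dimension). This gives the desired conclusion.

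The main obstacle is the appeal to Shirshov's height theorem, whose proof rests on a delicate combinatorial reduction of long monomials via the multilinear identity (the ``pivotal monomial'' technique). An alternative route, closer to the cited references, is to first show directly that $\fr{A}$ is \emph{nil of bounded index} by exploiting the multilinear PI in a uniform way, then apply Levitzki's theorem that a nil ring of bounded index is locally nilpotent, and finally use finite generation to conclude nilpotency. In either approach the delicate point is the same: transferring pointwise nilpotency into a uniform bound, a step that neither the PI nor the nil hypothesis alone provides.
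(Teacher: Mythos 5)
The paper does not prove this statement at all: it is quoted verbatim as Theorem 5 of Kaplansky's 1948 paper, so there is no internal argument to compare against. Your proposal is a genuine proof sketch, and it is essentially correct as a conditional argument: granting Shirshov's height theorem, the deduction is sound --- the finitely many words of bounded length in the generators are nilpotent, a common bound $M$ on their nilpotency indices exists because there are only finitely many of them, hence the Shirshov spanning monomials $w_{i_1}^{k_1}\cdots w_{i_s}^{k_s}$ with $s\leq h$ and all $k_\ell<M$ form a finite spanning set, so $\dim_{\bb{F}}\fr{A}<\infty$, and a finite-dimensional nil algebra is nilpotent. Two small caveats: the clause that the indices are ``drawn in a fixed order'' is not part of the height theorem (and is not needed --- boundedness of $s$ and of the exponents already gives a finite spanning set), and the multilinearization step should be accompanied by the remark that over a field one may normalize the identity so that one monomial has coefficient $1$, which is what Shirshov's combinatorics actually uses. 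Note also that this route (Shirshov, 1957) is anachronistic relative to the cited source: Kaplansky's original argument runs through the structure theory of PI-rings and Levitzki's local nilpotence of nil algebras of bounded index, which is the ``alternative route'' you gesture at; your version is the now-standard combinatorial proof and is, if anything, more self-contained modulo the height theorem, whereas the paper simply leans on the citation.
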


Below, we leave our contribution to Kurosh-Levitzki Problem. We show that any $\f{f}$-commutative ring is a positive solution to Kuroshi-Levitzki Problem.

\begin{theorem}\label{3.19}
Let $\fr{R}$ be an $\f{f}$-commutative finitely generated ring. If $\fr{R}$ is nil, then $\fr{R}$ is a nilpotent ring.
\end{theorem}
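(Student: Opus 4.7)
Let $x_1,\ldots,x_n$ generate $\fr{R}$, and for each $j$ fix $k_j\geq 1$ with $x_j^{k_j}=0$; set $K=\sum_{j=1}^n k_j$. I claim that $\fr{R}^{K-n+1}=\{0\}$. By distributivity it suffices to show that any monomial $m=y_1y_2\cdots y_N$ in the generators with $N\geq K-n+1$ is zero. The plan is: fix a generator $x_j$, slide every occurrence of $x_j$ in $m$ to the front, collapse the resulting prefix of ``twisted'' $x_j$'s into a single $\fr{S}$-action applied to $x_j^{a_j}$, and conclude by pigeonhole.

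The sliding step reuses the manipulation from (\ref{3.13})--(\ref{3.14}). If $m=Ux_jV$ with $U$ containing no $x_j$, then $\f{f}$-commutativity gives $Ux_j=\f{f}(U,x_j)\cdot(x_jU)$, and the action axiom $\lambda\cdot(ab)=(\lambda\cdot a)b$ rewrites this as $(\f{f}(U,x_j)\cdot x_j)U$, i.e.\ a single ``twisted'' copy $\tilde x_j^{(1)}\in\fr{S}\cdot x_j$ times $U$. Hence $m=\tilde x_j^{(1)}(UV)$, where $UV$ is $m$ with its first $x_j$ deleted. Iterating gives
\[
m=\tilde x_j^{(1)}\tilde x_j^{(2)}\cdots \tilde x_j^{(a_j)}\cdot m',
\]
where $a_j$ is the number of occurrences of $x_j$ in $m$, each $\tilde x_j^{(i)}\in\fr{S}\cdot x_j$, and $m'$ is obtained from $m$ by erasing all of its $x_j$'s.

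I then establish the auxiliary identity $(\mu_1\cdot x_j)(\mu_2\cdot x_j)\cdots(\mu_k\cdot x_j)=\nu\cdot x_j^{k}$ (for some $\nu\in\fr{S}$) by induction on $k$: having collapsed the first $k-1$ factors into $\nu'\cdot x_j^{k-1}$, write the full product as $\nu'\cdot\bigl(x_j^{k-1}(\mu_k\cdot x_j)\bigr)$, apply $\f{f}$-commutativity to the inner product, and collect all $\fr{S}$-factors on the left using the two action axioms. Combined with the sliding step, this gives $m=(\nu\cdot x_j^{a_j})\cdot m'$. If $N\geq K-n+1$, the pigeonhole principle forces $a_j\geq k_j$ for some $j$, so $x_j^{a_j}=0$ and thus $\nu\cdot x_j^{a_j}=0$ (using $\lambda\cdot 0=0$, a one-line consequence of $\lambda\cdot(x\cdot 0)=(\lambda\cdot x)\cdot 0$); hence $m=0$. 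The main technical subtlety is that the action axiom lets one extract an $\fr{S}$-factor only through a left multiplication, not a right one, so the sliding algorithm must be arranged so that every newly introduced $\fr{S}$-factor is already in leftmost position; otherwise it would be stranded inside the product with no way to reach $x_j^{a_j}$.
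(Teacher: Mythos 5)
Your argument is correct and follows essentially the paper's own route: both proofs rest on the pigeonhole count for a sufficiently long word in the generators together with the $\f{f}$-commutativity rearrangements (\ref{3.13})--(\ref{3.14}) that gather all occurrences of one generator into a single power, which then vanishes. The differences are only cosmetic: the paper slides the repeated generator to the \emph{right} end, so it stays untwisted and no collapsing identity or $\lambda\cdot 0=0$ remark is needed (and you should note the small bookkeeping case where an occurrence of $x_j$ is already leftmost, so the factor is $x_j$ itself rather than an element of $\fr{S}\cdot x_j$), while your per-generator count yields the marginally sharper bound $\fr{R}^{\sum_j(k_j-1)+1}=\{0\}$ in place of the paper's $(s-1)n+1$.
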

\begin{proof}
Suppose that $\fr{R}$ is nil. Let $n\in \bb{N}$ be the smallest number of generators of $\fr{R}$. Fix a set $\beta$ of generators of $\fr{R}$ with $n$ elements. Let $s\in \bb{N}$ be the largest nilpotency index of the elements of $\beta$. By $(\ref{3.01})$, $(\ref{3.13})$ and $(\ref{3.14})$, it is easy to check that $a_1a_2\cdots a_{(s-1)n+1}=0$ for any $a_1, a_2, \dots, a_{(s-1)n+1}\in\fr{R}$. Thus, we conclude that $\fr{R}$ is a nilpotent ring with nilpotency index $s\leq\f{nd}(\fr{R})\leq (s-1)n+1$.
\end{proof}

By proof of Theorem \ref{3.19}, observe that the nilpotency index of $\fr{R}$ is an integer such that $s\leq\f{nd}(\fr{R})\leq (s-1)n+1$, where $n$ is the smallest number of generators of $\fr{R}$, and $s$ is the largest nilpotency index of the elements of a generator set of $\fr{R}$ with $n$ elements.

The Kurosh-Levitzki Problem can be generalized as follows: ``given a ring $\fr{R}$ with a finite $\f{S}$-grading, if $\fr{R}_e$ is nil and finitely generated, then is $\fr{R}$ a nilpotent ring?''. This is the graded version of the Kurosh-Levitzki Problem. Obviously, Theorem \ref{3.18} ensures that any positive solution of the Kurosh-Levitzki Problem provides a positive solution of the graded version of the Kurosh-Levitzki Problem, for example, ``if $\fr{R}_e$ is nil, finitely generated and satisfies a polynomial identity, then $\fr{R}$ is nilpotent'' (it is enough to apply Theorems \ref{3.30} and \ref{3.18}).

In what follows, let us give some conditions for ``the neutral component is nil'' to imply the nilpotency of the graded rings. This problem can be seen as the graded version of Levitzki Problem. Observe that Theorem \ref{3.24} provides a positive solution to the graded version of the Kurosh Problem. Naturally, as in the graded version of the Kurosh-Levitzki Problem,  Theorem \ref{3.18} ensures that any positive solution of Levitzki Problem provides a positive solution of the graded version of the Levitzki Problem.

\begin{proposition}\label{3.17}
	Let $\fr{R}$ be a ring with a finite $\f{S}$-grading $\Gamma$, with $d=|\f{Supp}(\Gamma)|$. If $\fr{R}_e$ is nil of index $2$ and $\f{char}(\fr{R}_e)\neq2$, then $\fr{R}$ is nilpotent with $\f{nd}(\fr{R})\leq 3d$.
\end{proposition}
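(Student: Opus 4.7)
The strategy is to show that $\fr{R}_e$ is itself nilpotent of index at most $3$, and then invoke \textbf{Theorem \ref{3.18}} (with $r=\f{nd}(\fr{R}_e)\le 3$) to conclude $\f{nd}(\fr{R})\le dr\le 3d$ directly. Thus the whole task reduces to a purely ring-theoretic claim about $\fr{R}_e$: if $a^2=0$ for all $a\in\fr{R}_e$ and $\f{char}(\fr{R}_e)\ne 2$, then $(\fr{R}_e)^3=\{0\}$.

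First I would linearize the identity $a^2=0$: expanding $(x+y)^2=0$ and using $x^2=y^2=0$ gives $xy+yx=0$ for all $x,y\in\fr{R}_e$ (no characteristic assumption needed here). Next, for any $x,y,z\in\fr{R}_e$, I would apply this anticommutativity relation in several ways:
\begin{enumerate}
\item[(i)] Right-multiplying $xy=-yx$ by $z$ gives $xyz=-yxz$.
\item[(ii)] Left-multiplying $xz=-zx$ by $y$ gives $yxz=-yzx$.
\item[(iii)] Treating $yz$ as a single element in $\fr{R}_e$, the relation $x(yz)=-(yz)x$ gives $xyz=-yzx$.
\end{enumerate}
Combining (i) and (ii), $xyz=-yxz=-(-yzx)=yzx$, while (iii) reads $xyz=-yzx$. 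Subtracting, $2\cdot yzx=0$, and since $yzx\in\fr{R}_e$ and $\f{char}(\fr{R}_e)\ne 2$ we conclude $yzx=0$. As $x,y,z$ were arbitrary, $(\fr{R}_e)^3=\{0\}$.

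Consequently $\fr{R}_e$ is nilpotent with $r=\f{nd}(\fr{R}_e)\in\{1,2,3\}$. If $r>1$, \textbf{Theorem \ref{3.18}} gives $\f{nd}(\fr{R})\le dr\le 3d$ at once; if $r=1$ then $\fr{R}_e=\{0\}$ and \textbf{Proposition \ref{3.03}} yields $\f{nd}(\fr{R})\le d+1\le 3d$ (assuming $d\ge 1$, otherwise $\fr{R}=\{0\}$ trivially). The only real obstacle is the bookkeeping in the three-fold application of the anticommutativity relation to extract the sign discrepancy that, together with $\f{char}\ne 2$, kills triple products; once that is done, the conclusion is an immediate appeal to Theorem \ref{3.18}.
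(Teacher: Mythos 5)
Your proof is correct and follows essentially the same route as the paper: linearize $a^2=0$ to the anticommutativity $xy=-yx$ on $\fr{R}_e$, apply it three times to a triple product (your step (iii), using $x$ against the product $yz$, is exactly what the paper extracts from expanding $(ab+c)^2=0$) to get $2\cdot xyz=0$, kill it with $\f{char}(\fr{R}_e)\neq2$, and then invoke Theorem \ref{3.18}. Your separate handling of the degenerate case $\fr{R}_e=\{0\}$ is a harmless extra precaution already covered by Theorem \ref{3.18}.
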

\begin{proof}
	Given $a, b\in\fr{R}_e$, we have
	\begin{equation}\nonumber
	0=(a+b)^2=a^2+ b^2+ab+ba=ab+ba \ ,
	\end{equation}
	and hence, $ab=-ba$ for any $a, b\in\fr{R}_e$. Now, considering any $a, b, c\in\fr{R}_e$, it follows that
	\begin{equation}\nonumber
	\begin{split}
	0 &=(ab+c)^2 =(ab)^2+c^2+abc+cab=abc+cab\\
	  &=abc+(ca)b=abc-(ac)b=abc-a(cb) =abc-a(-bc) \\
	  &=2abc
	\ ,
	\end{split}
	\end{equation}
and so $abc=0$, since $\f{char}(\fr{R}_e)\neq2$. Therefore, $(\fr{R}_e)^3=0$. By Theorem \ref{3.18}, it follows that $\fr{R}$ is a nilpotent ring with $\f{nd}(\fr{R})\leq 3d$, where $d=|\f{Supp}(\Gamma)|$.
\end{proof}

\begin{theorem}\label{3.20}
Let $\f{S}$ be a left cancellative monoid and $\fr{R}$ a ring with a finite $\f{S}$-grading $\Gamma$. If $\fr{R}_e$ is nil, $\f{f}$-commutative and finitely generated, then $\fr{R}$ is a nilpotent ring. Moreover, if  $\fr{R}_e\neq\{0\}$ and $\{a_1,\dots, a_n\}$ generates $\fr{R}_e$, then $s \leq \f{nd}(\fr{R})\leq d((s-1)n+1)$, where $d=|\f{Supp}(\Gamma)|$ and $s=\f{min}\{m\in\bb{N} : a_i^m=0, i=1,\dots,n\}$. If $\fr{R}_e=\{0\}$, then $1\leq\f{nd}(\fr{R})\leq d+1$.
\end{theorem}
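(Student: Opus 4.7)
The plan is to reduce Theorem \ref{3.20} to the previously established results by composing two of them. Concretely, since $\fr{R}_e$ is itself nil, $\f{f}$-commutative and finitely generated, Theorem \ref{3.19} applied to $\fr{R}_e$ immediately tells me that $\fr{R}_e$ is a nilpotent ring; then Theorem \ref{3.18} lifts nilpotency from the neutral component to the whole graded ring $\fr{R}$, also giving the upper bound on $\f{nd}(\fr{R})$.

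First I would split the argument into the two cases present in the statement. If $\fr{R}_e=\{0\}$, the conclusion $1\leq \f{nd}(\fr{R})\leq d+1$ is nothing but Proposition \ref{3.03}. If $\fr{R}_e\neq\{0\}$, fix a generating set $\{a_1,\dots,a_n\}$ of $\fr{R}_e$ and let $s=\f{min}\{m\in\bb{N}:a_i^m=0,\ i=1,\dots,n\}$. Applying Theorem \ref{3.19} to $\fr{R}_e$, which is finitely generated, $\f{f}$-commutative and nil, the proof there actually yields $\f{nd}(\fr{R}_e)\leq (s-1)n+1$ using exactly these parameters. In particular $\fr{R}_e$ is nilpotent, so Theorem \ref{3.18} applies and gives
\begin{equation*}
\f{nd}(\fr{R})\leq d\cdot \f{nd}(\fr{R}_e)\leq d\bigl((s-1)n+1\bigr),
\end{equation*}
which is the required upper bound (one should remark that if $(s-1)n+1=1$ then $\fr{R}_e=\{0\}$ and we fall back on the previous case, so there is no issue with the ``$r>1$'' distinction in Theorem \ref{3.18}).

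For the lower bound $s\leq \f{nd}(\fr{R})$, I would argue as follows. By the minimality of $s$ in its definition, there must exist some index $i_0\in\{1,\dots,n\}$ with $a_{i_0}^{s-1}\neq 0$; otherwise $s-1$ would already annihilate every generator, contradicting the choice of $s$. Since $a_{i_0}\in\fr{R}_e\subseteq\fr{R}$, the element $a_{i_0}^{s-1}$ lies in $\fr{R}^{s-1}$, so $\fr{R}^{s-1}\neq\{0\}$ and hence $\f{nd}(\fr{R})\geq s$.

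I do not anticipate a genuine obstacle, since every ingredient has been prepared: Theorem \ref{3.19} supplies the transition ``nil $+$ finitely generated $+$ $\f{f}$-commutative $\Rightarrow$ nilpotent'' inside $\fr{R}_e$, while Theorem \ref{3.18} is the bridge that carries nilpotency of the neutral component to the whole graded ring. The only mildly delicate point is bookkeeping the bound: one must make sure the factor $(s-1)n+1$ coming from Theorem \ref{3.19} is correctly multiplied by $d=|\f{Supp}(\Gamma)|$ in Theorem \ref{3.18}, and that the degenerate case $\fr{R}_e=\{0\}$ is handled separately via Proposition \ref{3.03} so that the stated bound $\f{nd}(\fr{R})\leq d+1$ is recovered.
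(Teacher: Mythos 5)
Your proposal is correct and follows essentially the same route as the paper: apply Theorem \ref{3.19} to $\fr{R}_e$ to get $s\leq\f{nd}(\fr{R}_e)\leq(s-1)n+1$, lift via Theorem \ref{3.18} to obtain $\f{nd}(\fr{R})\leq d((s-1)n+1)$, and dispose of the case $\fr{R}_e=\{0\}$ by Proposition \ref{3.03}. Your explicit verification of the lower bound via a generator with $a_{i_0}^{s-1}\neq0$ is just a more hands-on version of the paper's chain $s\leq\f{nd}(\fr{R}_e)\leq\f{nd}(\fr{R})$.
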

\begin{proof}
In fact, by Theorem \ref{3.19}, it follows that $\fr{R}_e$ is nilpotent with $s\leq\f{nd}(\fr{R}_e)\leq r$, where $r=(s-1)n+1$, $s$ and $n$ are as in Theorem \ref{3.19}. Thus, by Theorem \ref{3.18}, we conclude that $\fr{R}$ is nilpotent with $s\leq\f{nd}(\fr{R})\leq dr$.

If $\fr{R}_e=\{0\}$, it follows from Proposition \ref{3.03} that $\fr{R}^{d+1}=\{0\}$.
\end{proof}

Note that the upper bound for the nilpotency index obtained in Theorem \ref{3.20} can be smaller than the one given by Theorem \ref{3.24} depending on the number $n$ of generators of $\fr{A}_e$.

The following examples ensures that the assumptions of previous theorems are necessary. The first three examples present graded rings (or algebras) in which the neutral component is not finitely generated. The last example concerns the case $\fr{R}_e$ is not $\f{f}$-commutative.

\begin{example}
If $\fr{R}_e$ can not be finitely generated, the previous theorem does not hold. To see this, a counterexample is given below. Let $\fr{R}={\bb{Z}[x_1, x_2, x_3, \dots]}/{I}$ be the quotient ring of the polynomial ring over $\bb{Z}$ in the variables $x_1, x_2, x_3, \dots$ by its ideal $I=\langle x_1^2, x_2^3, x_3^4, \dots\rangle$, with the trivial grading ($\fr{R}_e=\fr{R}$). We have that $\fr{R}$ is a commutative ring which is nil but it is not nilpotent.
\end{example}

\begin{example}[5. Remark (I), \cite{Naga52}]\label{3.21}
Let $\bb{K}$ be a field of characteristic $p\neq0$. Let $\fr{A}_k$ be the algebra over $\bb{K}$ with the generating elements $x_1,\dots, x_k$ with the fundamental relations $x_i^p= 0$, $x_i x_j= x_j x_i$ for $i,j=1,2, \dots, k$; and put $\fr{A}=\sum_{k=1}^\infty \fr{A}_k$. Then $\fr{A}$ is a commutative algebra which is nil of bounded index, with the trivial grading for any left cancellative monoid $\f{S}$, but $\fr{A}$ is not nilpotent.
\end{example}

\begin{example}[Lemma 8(5.6), \cite{Rege91}]
Let $\f{E}$ be the infinite dimensional Grassmann algebra over a field of characteristic $p\neq0$. Let us consider $\f{E}^*=\f{E}-\{1\}$. Then $\f{E}^*$ satisfies $x^p=0$ for any $x\in\f{E}$, i.e. $\f{E}^*$ is nil of degree $p$. We have that $\f{E}^*$ is a $\bb{Z}_2$-graded ring, such that $\f{E}_0$ is a nil commutative algebra (ring), but $\f{E}^*$ is not nilpotent. 
\end{example}

\begin{example}[Example $1$, \cite{Golo64}]\label{3.22}
Golod exhibits in \cite{Golo64} a construction of a nil ring $\fr{R}$ which is finitely generated but it is not nilpotent (see also Chapter 8 in \cite{Hers05}). Then, by Theorem \ref{3.19}, the ring $\fr{R}$ can not be $\f{f}$-commutative for any semigroup $\fr{S}$ and map $f$. This ring with the trivial grading also gives an example which shows the necessity of the condition ``$\fr{R}_e$ is $\f{f}$-commutative'' to be required in Theorem \ref{3.20}.
\end{example}

Finally, in Corollary \ref{3.04}, we show that the results obtained in Proposition \ref{3.03}, Theorem \ref{3.15} and Theorem \ref{3.18} can be presented for rings with not necessarily finite gradings. Analogously, using a similar idea, we can generalize other results of this work, such as Theorem \ref{3.24}, Corollary \ref{3.28}, Proposition \ref{3.17} and Theorem \ref{3.20}.

\subsection{Graded Rings and K\"{o}the's Problem}

Perhaps our greatest contribution in this work is to relate Problem \ref{problem01} and K\"{o}the's Problem. In \cite{Koth30}, K\"{o}the conjectured that ``\textit{if a ring $\fr{R}$ has no nonzero nil ideals, then $\fr{R}$ has no nonzero one-sided nil ideals}''. This conjecture is known as K\"{o}the's Problem, and is still unsolved in the general case. Here, we have proved a relation between graded rings and K\"{o}the's Problem, as well as we have given a positive solution to K\"{o}the's Problem for the $\f{f}$-commutative rings case.

The result below exhibits some well-known equivalences of K\"{o}the's Problem, which will be basic tools for our study.

\begin{theorem}[Some equivalences of K\"{o}the's Problem, \cite{Smok01}]\label{3.25}
	The following assumptions are equivalent:
	\begin{itemize}
		\item[i)] If a ring has no nonzero nil ideals, then it has no nonzero one-sided ideals (K\"{o}the's conjecture);
		\item[ii)] The sum of two right nil ideals in any ring is nil;
		\item[iii)] For every nil ring $\fr{R}$, the ring of $2\times2$ matrices over $\fr{R}$ is nil;
		\item[iv)] For every nil ring $\fr{R}$, the ring of $n\times n$ matrices over $\fr{R}$ is nil.
	\end{itemize}
\end{theorem}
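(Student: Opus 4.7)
The plan is to close the cycle (i) $\Leftrightarrow$ (ii) $\Rightarrow$ (iv) $\Leftrightarrow$ (iii) $\Rightarrow$ (ii), treating the pieces in order of increasing difficulty. The trivialities come first: (iv) $\Rightarrow$ (iii) is the specialization $n=2$, and the reverse (iii) $\Rightarrow$ (iv) follows by induction on $k$ from the canonical isomorphism $M_{2^{k+1}}(\fr{R}) \cong M_{2}(M_{2^{k}}(\fr{R}))$, together with the embedding of $M_{n}(\fr{R})$ as an upper-left-corner subring of $M_{2^{k}}(\fr{R})$ for any $2^{k} \geq n$ (subrings of nil rings are nil).

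For (i) $\Leftrightarrow$ (ii), I would use the upper nilradical $N = \f{Nil}^{*}(\fr{R})$, defined as the sum of all nil two-sided ideals of $\fr{R}$. The preliminary step is that $N$ is itself nil: if $I$ and $J$ are nil two-sided ideals and $x \in I+J$, then the image of $x$ in $(I+J)/J \cong I/(I\cap J)$ is nil, so $x^{n} \in J$ for some $n$, and nilpotency of $J$ forces $x^{nm}=0$; the argument extends to arbitrary finite sums and hence to $N$. From this, (i) $\Rightarrow$ (ii) follows because $\fr{R}/N$ has no nonzero nil two-sided ideal, so by (i) it has no nonzero one-sided nil ideal; since the images in $\fr{R}/N$ of given right nil ideals $I, J$ are themselves right nil ideals (homomorphic images of nil rings are nil), they vanish, whence $I, J \subseteq N$ and $I+J$ is nil. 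Conversely, for (ii) $\Rightarrow$ (i), given a right nil ideal $I$ of $\fr{R}$, one checks that $r I$ is again right nil for every $r \in \fr{R}$ via the identity $(ra)^{n+1} = r(ar)^{n} a$ with $ar \in IR \subseteq I$ nilpotent; iterating (ii) then shows every finite sum $I + r_{1}I + \cdots + r_{k}I$ is nil, and therefore the two-sided ideal $I + \fr{R}I$ is nil, forcing $I=0$ under hypothesis (i).

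The step (ii) $\Rightarrow$ (iv) I would handle by a Peirce-style decomposition. Assuming $\fr{R}$ nil, for $k=1,\dots,n$ set $P_{k} = E_{kk} M_{n}(\fr{R})$, the right ideal of matrices whose only possibly nonzero row is the $k$-th. A one-line computation gives $(E_{kk}X)^{m} = x_{kk}^{m-1}\,(E_{kk}X)$, which vanishes for some $m$ since $x_{kk} \in \fr{R}$ is nilpotent; thus each $P_{k}$ is a right nil ideal of $M_{n}(\fr{R})$, and $n-1$ applications of (ii) to the decomposition $M_{n}(\fr{R}) = P_{1} + \cdots + P_{n}$ yield that $M_{n}(\fr{R})$ is nil. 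The remaining arrow (iii) $\Rightarrow$ (ii), equivalently (iv) $\Rightarrow$ (i), is the substantive direction and the step I expect to be the main obstacle: one must manufacture nilpotency of a \emph{mixed} element $x = a+b$, with $a$ in one right nil ideal and $b$ in another, from nilpotency of matrices over a nil ring, despite the two right ideals not being comparable. The Krempa--Sands trick is to encode such an $x$ as an entry of a sufficiently high power of a $2\times 2$ matrix whose entries lie in the nil subring generated by $\{a,b\}$ together with the auxiliary ring multiplications relevant to $x$, so that (iii) applied to that subring forces a power of $x$ to vanish; the delicate combinatorial point is to choose the encoding so that the right-ideal structures of $I$ and $J$ get absorbed into a common nil subring. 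I would follow the expositions in \cite{Krem72,Sand73,Smok01} to discharge this final step.
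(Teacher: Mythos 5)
First, note that the paper itself offers no proof of this statement: it is quoted from \cite{Smok01} (the equivalences being due to Krempa \cite{Krem72} and Sands \cite{Sand73}), so the comparison is with a citation, not with an argument. The arrows you actually carry out are correct and standard: (i)$\Leftrightarrow$(ii) via the upper nilradical $N=\f{Nil}^{*}(\fr{R})$ and the identity $(ra)^{n+1}=r(ar)^{n}a$ (two small slips: you say ``nilpotency of $J$'' where you only have, and only need, that $J$ is nil; and since (i) concerns one-sided ideals you should dispatch the left-ideal case, e.g.\ by passing to the opposite ring); (ii)$\Rightarrow$(iv) via the row right ideals $P_k$; and (iii)$\Leftrightarrow$(iv) via $M_{2^{k+1}}(\fr{R})\cong M_{2}(M_{2^{k}}(\fr{R}))$ together with corner embeddings.

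The genuine gap is the closing arrow (iii)$\Rightarrow$(ii) (equivalently (iv)$\Rightarrow$(i)), which is the entire nontrivial content of the theorem, and the sketch you give for it would not work as stated: the ``nil subring generated by $\{a,b\}$'' is not known to be nil --- it contains $a+b$, whose nilpotency is exactly the point at issue --- so there is no nil ring to which (iii) can legitimately be applied. The actual Krempa--Sands argument is different in shape: given a nil right ideal $I$ of $A$, pass to the unitization $A^{1}$, take $x=\sum_{i=0}^{k}r_{i}a_{i}$ with $a_{i}\in I$, $r_{i}\in A^{1}$, and expand $x^{m+1}=\sum r_{i_{0}}\,b_{i_{0}i_{1}}b_{i_{1}i_{2}}\cdots b_{i_{m-1}i_{m}}\,a_{i_{m}}$, where $b_{ij}=a_{i}r_{j}\in I$; the inner sums are entries of $B^{m}$ for the matrix $B=(b_{ij})\in M_{k+1}(I)$, and since $I$ is a nil ring, (iv) forces $B$ to be nilpotent, hence $x$ is nilpotent, the two-sided ideal generated by $I$ is nil, and (i) follows (then (ii) follows by your own first part). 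So either reproduce this computation to close the cycle, or treat the theorem exactly as the paper does, as an imported result of \cite{Krem72,Sand73,Smok01}; as a self-contained proof, your proposal is incomplete at precisely the step that makes the theorem a theorem.
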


The K\"{o}the's Problem has been solved positively in some classes of rings, but no answer in the general case. Note that Dubnov-Ivanov-Nagata-Higman Theorem (see Theorem \ref{teonagatahigman}) provides a positive solution to K\"{o}the's Problem for algebras over fields of characteristic zero (and some other positive characteristics).

Below, let us present one more class of rings that provides a positive solution to K\"{o}the's Problem. In what follows, let us consider a special $\bb{Z}_n$-grading on $M_n(\fr{R})$, where $n\in\bb{Z}$ and $\fr{R}$ is a ring, called \textit{elementary $\bb{Z}_n$-grading}, which is defined by 
$$
\Gamma: M_n(\fr{R})=\bigoplus_{\lambda\in\bb{Z}_n} M_\lambda,
$$
where $M_{\lambda}=\{ E_{ij}(a)\in M_n(\fr{R}) : a\in\fr{R}, \overline{j-i}=\lambda \}$ is a subgroup of $(M_n(\fr{R}), +)$, for any $\lambda\in\bb{Z}_n$. Notice that $M_{\overline{0}}=\left\{\sum_{i=1}^{n}E_{ii}(a_i) \ : \ a_1,a_2,\dots, a_n\in \fr{R} \right\}$ it is the neutral component of $\Gamma$.

\begin{theorem}\label{3.26}
The K\"{o}the's Problem has a positive answer for any $\f{f}$-commutative ring.
\end{theorem}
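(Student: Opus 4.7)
The plan is to deduce the statement from Theorem \ref{3.25} together with Theorem \ref{3.15}. By the equivalence (iv) in Theorem \ref{3.25}, it suffices to prove the following claim restricted to our class: \emph{if $\fr{R}$ is a nil $\f{f}$-commutative ring and $n\geq 1$, then $M_n(\fr{R})$ is nil.} Fix such an $\fr{R}$ and $n$, and equip $M_n(\fr{R})$ with the elementary $\bb{Z}_n$-grading $\Gamma$ described just before the theorem. Then $\bb{Z}_n$ is a (finite) left cancellative monoid, $\Gamma$ has finite support, and the neutral component is
\begin{equation}\nonumber
M_{\bar{0}}=\Bigl\{\,\textstyle\sum_{i=1}^{n}E_{ii}(a_i)\ :\ a_1,\dots,a_n\in\fr{R}\Bigr\}\cong \underbrace{\fr{R}\times\cdots\times\fr{R}}_{n\text{ times}}
\end{equation}
as rings. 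The strategy is to verify the two hypotheses of Theorem \ref{3.15} for this grading and then invoke that theorem to conclude that $M_n(\fr{R})$ itself is nil.

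First I would check that $M_{\bar{0}}$ is nil. Any diagonal matrix $A=\sum_i E_{ii}(a_i)$ satisfies $A^k=\sum_i E_{ii}(a_i^k)$, which vanishes as soon as $k$ exceeds the maximum of the (finite) nilpotency indices of $a_1,\dots,a_n$ in the nil ring $\fr{R}$. Next I would show that $M_{\bar{0}}$ is $\f{f}$-commutative. Let $\fr{S}$ be the semigroup acting on $\fr{R}$ and $\f{f}:\fr{R}\times\fr{R}\to \fr{S}$ the map witnessing $\f{f}$-commutativity of $\fr{R}$. Form $\tilde{\fr{S}}=\fr{S}^{n}$ with componentwise multiplication, and let it act componentwise on $M_{\bar{0}}\cong\fr{R}^n$ by $(\lambda_1,\dots,\lambda_n)\cdot(a_1,\dots,a_n)=(\lambda_1\cdot a_1,\dots,\lambda_n\cdot a_n)$. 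Define $\tilde{\f{f}}:M_{\bar{0}}\times M_{\bar{0}}\to\tilde{\fr{S}}$ by
\begin{equation}\nonumber
\tilde{\f{f}}\bigl((a_1,\dots,a_n),(b_1,\dots,b_n)\bigr)=\bigl(\f{f}(a_1,b_1),\dots,\f{f}(a_n,b_n)\bigr).
\end{equation}
A routine coordinatewise check, using the corresponding identities in $\fr{R}$, shows that $\tilde{\fr{S}}$ is a semigroup action on $M_{\bar{0}}$ in the sense of Definition \ref{1.70} and that $AB=\tilde{\f{f}}(A,B)\cdot BA$ for all $A,B\in M_{\bar{0}}$, i.e.\ $[A,B]_{\tilde{\f{f}}}=0$.

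Once these two properties are in place, Theorem \ref{3.15} applied to the finite $\bb{Z}_n$-grading $\Gamma$ on $M_n(\fr{R})$ yields that $M_n(\fr{R})$ is nil. Since $n$ was arbitrary, Theorem \ref{3.25}(iv) translates this into the K\"{o}the property for $\fr{R}$, completing the proof. I expect the main (though still light) obstacle to be the verification in the second paragraph: one must make sure the componentwise definitions of $\tilde{\fr{S}}$, its action, and $\tilde{\f{f}}$ genuinely satisfy the axioms in Definition \ref{1.70}, since there is no a priori compatibility between the values $\f{f}(a_i,b_i)$ at different coordinates and one cannot hope to collapse them into a single scalar of $\fr{S}$; working inside the product semigroup $\fr{S}^n$ is precisely what circumvents this difficulty.
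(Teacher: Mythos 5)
Your proposal is correct and follows essentially the same route as the paper: equip $M_n(\fr{R})$ with the elementary $\bb{Z}_n$-grading, observe the diagonal neutral component is nil and $\tilde{\f{f}}$-commutative via a componentwise (product/diagonal) semigroup action, apply Theorem \ref{3.15}, and conclude via Theorem \ref{3.25}. The only cosmetic difference is that the paper works with $M_2(\fr{R})$ and item (iii) of Theorem \ref{3.25}, whereas you treat all $n$ and invoke item (iv); the construction is otherwise identical.
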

\begin{proof}
Let $\fr{R}$ be a nil $\f{f}$-commutative ring. Let us show that $M_2(\fr{R})$ is nil. 

Let $\Gamma: M_2(\fr{R})=M_0\oplus M_1$, with $M_0=\left\{\begin{pmatrix}  \fr{R} & 0 \\
0 & \fr{R} 
\end{pmatrix}
\right\}$, and $M_1=\left\{\begin{pmatrix}  0 & \fr{R} \\
\fr{R} & 0 
\end{pmatrix}
\right\}$, the elementary $\bb{Z}_2$-grading on $M_2(\fr{R})$. Since 
	\begin{equation}\nonumber
\begin{pmatrix}  a & 0 \\
0 & b 
\end{pmatrix}
^n=\begin{pmatrix}  a^n & 0 \\
0 & b^n 
\end{pmatrix}
	\end{equation}
for any $a,b\in\fr{R}$ and $n\in\bb{N}$, we have that $M_0$ is nil. Suppose that $f:\fr{R}\times\fr{R} \rightarrow \fr{S}$, where $\fr{S}$ is a semigroup acting on the left of $\fr{R}$. Then define a semigroup 
$\tilde{\fr{S}}=\left\{\begin{pmatrix}  \alpha & 0 \\
0 & \beta
\end{pmatrix} : \alpha,\beta\in\fr{S}
\right\}\subset M_2(\fr{S})$
with the usual product of diagonal of matrices. Observe that $\tilde{\fr{S}}$ acts on $M_0$ from the left naturally:
	\begin{equation}\nonumber
\begin{pmatrix}  \alpha & 0 \\
0 & \beta
\end{pmatrix}
\begin{pmatrix}  a & 0 \\
0 & b 
\end{pmatrix}
=
\begin{pmatrix}  \alpha a & 0 \\
0 & \beta b 
\end{pmatrix}
\ ,
	\end{equation}
for any $\alpha,\beta \in \fr{S}$, and $a,b\in\fr{R}$. Consider the map $\tilde{\f{f}}$ of $M_0\times M_0$ to $\tilde{\fr{S}}$ defined by
\begin{equation}\nonumber
\tilde{\f{f}}\left(\begin{pmatrix}  a & 0 \\
	0 & b 
\end{pmatrix}
, \begin{pmatrix}  c & 0 \\
	0 & d 
\end{pmatrix}
\right)=\begin{pmatrix}  \f{f}(a,c) & 0 \\
	0 & \f{f}(b,d)
\end{pmatrix}
	\ .
\end{equation}
Observe that $M_0$ is $\tilde{\f{f}}$-commutative, and hence, by Theorem \ref{3.15}, it follows that $M_2(\fr{R})$ is a nil ring. By Theorem \ref{3.25}, we conclude that $\fr{R}$ ensures a positive solution to K\"{o}the's Problem, and thus, the result follows.
\end{proof}

Now, suppose that K\"{o}the's Problem has a negative solution. 
By Theorem \ref{3.25}, it follows that the $2\times2$ matrices ring $M_2(\fr{R})$ is not a nil ring for some nil ring $\fr{R}$. 
Considering the elementary $\bb{Z}_2$-grading on $\fr{A}=M_2(\fr{R})$, we have that $\fr{A}_0\cong\fr{R}\times\fr{R}$ is also nil. Thus, we can conclude that $\fr{A}$ is a negative solution to Problem \ref{problem01}.

In what follows, we show a relation between graded rings and K\"{o}the's Problem.

\begin{theorem}\label{3.29}
A positive answer to Problem \ref{problem01} implies that K\"{o}the's Problem has a positive solution. In particular, a positive solution of Problem \ref{problem01} for $\bb{Z}_n$-graded rings, for some $n\in\bb{N}$, implies that K\"{o}the's conjecture is true.
\end{theorem}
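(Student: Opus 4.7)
The plan is to leverage the equivalences of K\"{o}the's Problem stated in Theorem \ref{3.25}, specifically the equivalence with the assertion ``for every nil ring $\fr{R}$, the ring $M_n(\fr{R})$ is nil for all $n$.'' The bridge to Problem \ref{problem01} will be the elementary $\bb{Z}_n$-grading on $M_n(\fr{R})$, exactly the tool already used in the proof of Theorem \ref{3.26}.

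First I would assume Problem \ref{problem01} has a positive answer and take an arbitrary nil ring $\fr{R}$. For a fixed $n\in\bb{N}$, I would equip $M_n(\fr{R})$ with the elementary $\bb{Z}_n$-grading
\[
\Gamma:\ M_n(\fr{R})=\bigoplus_{\lambda\in\bb{Z}_n} M_\lambda,\qquad M_\lambda=\{E_{ij}(a)\in M_n(\fr{R}):a\in\fr{R},\ \overline{j-i}=\lambda\},
\]
which is a finite $\bb{Z}_n$-grading (of support order at most $n$). The neutral component is
\[
M_{\overline{0}}=\left\{\sum_{i=1}^n E_{ii}(a_i):a_1,\dots,a_n\in\fr{R}\right\}\cong \underbrace{\fr{R}\times\cdots\times\fr{R}}_{n\text{ times}}.
\]
Since a diagonal matrix with nil entries is itself nil (its $k$-th power is the diagonal matrix of $k$-th powers), and since $\fr{R}$ is nil, it follows immediately that $M_{\overline{0}}$ is a nil ring.

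Now I would invoke the hypothesis: because Problem \ref{problem01} has a positive solution, the neutral component $M_{\overline{0}}$ being nil forces the whole $\bb{Z}_n$-graded ring $M_n(\fr{R})$ to be nil. Applying Theorem \ref{3.25} (namely equivalence $(iv)\Rightarrow(i)$), the nilness of $M_n(\fr{R})$ for every nil ring $\fr{R}$ (equivalently, for $n=2$ by $(iii)$) yields K\"{o}the's conjecture. This proves the first assertion.

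For the refined ``in particular'' clause, the same argument works verbatim once we have a positive solution of Problem \ref{problem01} restricted to $\bb{Z}_n$-graded rings for a single $n\geq 2$: the elementary $\bb{Z}_n$-grading on $M_n(\fr{R})$ is already a $\bb{Z}_n$-grading, so the hypothesis applies and $M_n(\fr{R})$ is nil, which by Theorem \ref{3.25} suffices. I do not anticipate a serious obstacle here: the entire argument is a direct transport between the two problem formulations via the elementary grading, and the only substantive verification, that $M_{\overline{0}}$ is nil whenever $\fr{R}$ is nil, is elementary since the diagonal decomposition reduces nilpotency of each element to nilpotency of finitely many entries in $\fr{R}$.
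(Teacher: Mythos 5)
Your proposal is correct and follows essentially the same route as the paper: equip $M_n(\fr{R})$ with the elementary $\bb{Z}_n$-grading, observe that the diagonal neutral component $M_{\overline{0}}$ is nil exactly when $\fr{R}$ is nil, invoke the positive answer to Problem \ref{problem01} to get $M_n(\fr{R})$ nil, and conclude via the matrix-ring equivalence in Theorem \ref{3.25}. No gaps to report.
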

\begin{proof}
Let $\fr{R}$ be a associate ring, $n\in\bb{N}$, and consider the matrix ring $M_n(\fr{R})$ over $\fr{R}$. Let $\Gamma: M_n(\fr{R})=\bigoplus_{\lambda\in\bb{Z}_n} M_{\lambda}$ be the elementary $\bb{Z}_n$-grading on $M_n(\fr{R})$. Since $M_{\overline{0}}=\left\{\sum_{i=1}^{n}E_{ii}(a_i)\ : \ a_1,a_2,\dots, a_n\in \fr{R} \right\}$ and $\left(\sum_{i=1}^{n}E_{ii}(b_i) \right)^s=\sum_{i=1}^{n}E_{ii}(b_i^s)$ for any $b_i\in \fr{R}$ and $s\in\bb{N}$, we have that $M_{\overline{0}}$ is nil iff $\fr{R}$ is nil. Hence, the positive answer of Problem \ref{problem01} for $\bb{Z}_n$-graded rings implies that the item {\it iv)} of Theorem \ref{3.25} is true for any ring $\fr{R}$, and consequently, K\"{o}the's conjecture is true.
\end{proof}

The previous theorem shows a connection between graded rings and K\"{o}the's Problem. More specifically, Problem \ref{problem01} implies K\"{o}the's Problem. But, are K\"{o}the's Problem and Problem \ref{problem01} equivalent? This question is still unanswered in the general case.
		

\bibliographystyle{amsplain}

\end{document}